\newtheorem{theorem}{Theorem}
\theoremstyle{plain}
\newtheorem{conjecture}{Conjecture}
\newtheorem{corollary}{Corollary}
\newtheorem{lemma}{Lemma}
\newtheorem{property}{Property}
\newtheorem{remark}{Remark}
\numberwithin{equation}{section}
\begin{document}
\title[Some sharp inequalities for the Toader-Qi mean]{Some sharp
inequalities for the Toader-Qi mean}
\author{Zhen-Hang Yang}
\address{Power Supply Service Center, ZPEPC Electric Power Research
Institute, Hangzhou, Zhejiang, China, 310009}
\email{yzhkm@163.com}
\date{June 21, 2015}
\subjclass[2010]{Primary 26E60, 26D07; Secondary 33C10}
\keywords{Inequality, Toader-Qi mean, modified Bessel function of the first
kind}
\dedicatory{Dedicated to my father Xin-Jiang Yang.}
\thanks{This paper is in final form and no version of it will be submitted
for publication elsewhere.}

\begin{abstract}
The Toader-Qi mean of positive numbers $a$ and $b$ defined by%
\begin{equation*}
TQ\left( a,b\right) =\frac{2}{\pi }\int_{0}^{\pi /2}a^{\cos ^{2}\theta
}b^{\sin ^{2}\theta }d\theta
\end{equation*}%
is related to the modified Bessel function of the first kind. In this paper,
we present several properties of this mean, and establish some sharp
inequalities for this mean in terms of power and logarithmic means. From
these a nice chain of inequalities involving Gauss compound mean, Toader
mean and Toader-Qi mean is presented.
\end{abstract}

\maketitle

\section{Introduction}

Let the function $p:\left( 0,\infty \right) \rightarrow \mathbb{R}$ be
strictly monotone and let $n\in \mathbb{R}$. The Toader' family of mean
values is defined in \cite{Toader-JMAA-218(2)-1998} by%
\begin{equation*}
M_{p,n}\left( a,b\right) =p^{-1}\left( \frac{1}{2\pi }\int_{0}^{2\pi
}p\left( r_{n}\left( \theta \right) \right) d\theta \right) ,
\end{equation*}%
where%
\begin{equation*}
r_{n}\left( \theta \right) =\left\{ 
\begin{array}{ll}
\left( a^{n}\cos ^{2}\theta +b^{n}\sin ^{2}\theta \right) ^{1/n} & \text{if }%
n\neq 0, \\ 
a^{\cos ^{2}\theta }b^{\sin ^{2}\theta } & \text{if }n=0%
\end{array}%
\right.
\end{equation*}%
for $\theta \in \left( 0,2\pi \right) $, $p^{-1}$ is the inverse of a
strictly monotonic function $p$. Also, it is obvious that%
\begin{equation*}
M_{p,n}\left( a,b\right) =p^{-1}\left( \frac{1}{2\pi }\int_{0}^{2\pi
}p\left( r_{n}\left( \theta \right) \right) d\theta \right) =p^{-1}\left( 
\frac{2}{\pi }\int_{0}^{\pi /2}p\left( r_{n}\left( \theta \right) \right)
d\theta \right) .
\end{equation*}

When $p(x)=1/x$ and $n=2$, we see that%
\begin{equation}
M_{1/x,2}\left( a,b\right) =\frac{\pi /2}{\int_{0}^{\pi /2}\left( a^{2}\cos
^{2}\theta +b^{2}\sin ^{2}\theta \right) ^{-1/2}d\theta }=AGM\left(
a,b\right)  \label{AGM}
\end{equation}%
is the classical Gauss compound mean related to the complete integrals of
the first kind. Some inequalities involving $AGM$ can be found in \cite%
{Borwein-PA-1987}, \cite{Carlson-SIAMR-33-1991}, \cite%
{Borwein-JMAA-177(2)-1993}, \cite{Vamanamurthy-JMAA-183(1)-1994}, \cite%
{Sandor-JMAA-189-1995}, \cite{Neuman-IJMMS-16-2003}, \cite%
{Alzer-JCAM-172-2004}, \cite{Qi-arxiv-0902.2515V1}, \cite%
{Yang-IJMA-4(21)-2010}, \cite{Yang-JIA-192-2014}.

While letting $p(x)=x$ and $n=2$ yields%
\begin{equation}
M_{x,2}\left( a,b\right) =\frac{2}{\pi }\int_{0}^{\pi /2}\left( a^{2}\cos
^{2}\theta +b^{2}\sin ^{2}\theta \right) ^{1/2}d\theta =\mathcal{T}\left(
a,b\right) ,  \label{Toadermean}
\end{equation}%
which is the Toader mean related to the complete integrals of the second
kind. There has some papers studied bounds for the mean in terms of other
simpler means such as \cite{Alzer-JCAM-172-2004}, \cite{Qiu-JHIEE-20(1)-2000}%
, \cite{Chu-PIASMS-121-2011}, \cite{Chu-RM-61-2012}, \cite{Chu-AAA-11-2012}, 
\cite{Chu-JMI-7(2)-2013}, \cite{Song-JMI-7(4)-2013}, \cite%
{Li-JFSA-2013-394194}, \cite{Hua-F-28(4)-2014}, \cite%
{Hua-PIAS(MS)-124(4)-2014}.

Taking $p\left( x\right) =x^{q}$ ($q\neq 0$) and $n=0$ gives%
\begin{equation*}
M_{x^{q},0}\left( a,b\right) =\left( \frac{2}{\pi }\int_{0}^{\pi /2}a^{q\cos
^{2}\theta }b^{q\sin ^{2}\theta }d\theta \right) ^{1/q}.
\end{equation*}%
In particular, we have%
\begin{equation}
M_{x,0}\left( a,b\right) =\frac{2}{\pi }\int_{0}^{\pi /2}a^{\cos ^{2}\theta
}b^{\sin ^{2}\theta }d\theta .  \label{TQmean}
\end{equation}%
The mean $M_{x^{q},0}\left( a,b\right) $ seems to be mysterious so that the
author said that he did not know how to determine any mean at the end of the
paper \cite{Toader-JMAA-218(2)-1998}. Very recently, Qi et al. \cite[Lemma
2.1]{Qi-unpubl.-2015} revealed the surprising relation between the mean $%
M_{x^{q},0}\left( a,b\right) $ and modified Bessel functions of the first
kind. They proved that

\begin{theorem}[{\protect\cite[Lemma 2.1]{Qi-unpubl.-2015}}]
For positive numbers $a,b>0$, we have%
\begin{equation}
M_{x,0}\left( a,b\right) =\frac{2}{\pi }\int_{0}^{\pi /2}a^{\cos ^{2}\theta
}b^{\sin ^{2}\theta }d\theta =\sqrt{ab}I_{0}\left( \ln \sqrt{\frac{a}{b}}%
\right)  \label{Qi-r-1}
\end{equation}%
and%
\begin{equation}
M_{x^{q},0}\left( a,b\right) =\left( \frac{2}{\pi }\int_{0}^{\pi /2}a^{q\cos
^{2}\theta }b^{q\sin ^{2}\theta }d\theta \right) ^{1/q}=\sqrt{ab}%
I_{0}^{1/q}\left( q\ln \sqrt{\frac{a}{b}}\right) ,  \label{Qi-r-q}
\end{equation}%
where%
\begin{equation*}
I_{v}\left( z\right) =\sum_{n=0}^{\infty }\frac{1}{n!\Gamma \left(
v+n+1\right) }\left( \frac{z}{2}\right) ^{2n+v}\text{, \ }z\in \mathbb{C}%
\text{, \ }v\in \mathbb{R}\backslash \{-1,-2,...\}
\end{equation*}%
denotes the modified Bessel functions of the first kind and%
\begin{equation*}
\Gamma \left( z\right) =\lim_{n\rightarrow \infty }\frac{n!n^{z}}{%
\prod_{k=0}^{n}\left( z+k\right) }\text{, \ }z\in \mathbb{C}\backslash
\{-1,-2,...\}
\end{equation*}%
is the classical gamma function.
\end{theorem}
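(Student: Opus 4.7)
The plan is to reduce the integral to a standard integral representation of the modified Bessel function $I_{0}$, namely
\begin{equation*}
I_{0}(t)=\frac{1}{\pi }\int_{0}^{\pi }e^{t\cos \phi }\,d\phi .
\end{equation*}
To do so, I would first symmetrize the exponents by pulling out the geometric mean $\sqrt{ab}$. Using $\cos ^{2}\theta +\sin ^{2}\theta =1$ and the identity $\cos ^{2}\theta -1/2=\tfrac{1}{2}\cos 2\theta $ (so $\sin ^{2}\theta -1/2=-\tfrac{1}{2}\cos 2\theta $), I would rewrite
\begin{equation*}
a^{\cos ^{2}\theta }b^{\sin ^{2}\theta }=\sqrt{ab}\left( a/b\right) ^{\cos (2\theta )/2}=\sqrt{ab}\,\exp \!\bigl(\cos (2\theta )\,\ln \sqrt{a/b}\bigr).
\end{equation*}

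Setting $t=\ln \sqrt{a/b}$ and substituting $\phi =2\theta $ in $\int_{0}^{\pi /2}e^{t\cos 2\theta }d\theta $ turns the integral into $\tfrac{1}{2}\int_{0}^{\pi }e^{t\cos \phi }d\phi $. Multiplying by the prefactor $2/\pi $ yields exactly $I_{0}(t)$ by the integral representation above, and this proves \eqref{Qi-r-1}.

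For \eqref{Qi-r-q} I would simply apply the identity already established, but with $a^{q}$ and $b^{q}$ replacing $a$ and $b$. Indeed,
\begin{equation*}
\frac{2}{\pi }\int_{0}^{\pi /2}a^{q\cos ^{2}\theta }b^{q\sin ^{2}\theta }d\theta =\sqrt{a^{q}b^{q}}\,I_{0}\!\left( \ln \sqrt{a^{q}/b^{q}}\right) =(ab)^{q/2}I_{0}\!\left( q\ln \sqrt{a/b}\right) ,
\end{equation*}
and taking the $q$-th root gives the stated formula.

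There is no real obstacle here beyond recognizing which integral representation of $I_{0}$ to invoke; one could alternatively expand $e^{t\cos \phi }$ in its Taylor series and integrate term by term, using $\int_{0}^{\pi }\cos ^{k}\phi \,d\phi $ (which vanishes for odd $k$ and equals $\pi \binom{2n}{n}/2^{2n}$ for $k=2n$) to match the series definition of $I_{0}$ directly. The slightly delicate point in either approach is the bookkeeping of the substitution $\phi =2\theta $ and the constant $\sqrt{ab}$, so I would double-check the normalization by specializing to $a=b$, where both sides must equal $a$ (using $I_{0}(0)=1$).
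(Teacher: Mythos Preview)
Your proof is correct and follows essentially the same route as the paper: the paper (immediately after stating the cited theorem) performs the same symmetrization by factoring out $\sqrt{ab}$ and setting $t=\tfrac{1}{2}\ln(b/a)$ to arrive at $\frac{2}{\pi}\int_{0}^{\pi/2}e^{t\cos 2\theta}\,d\theta$, which it then identifies with $I_{0}(t)$ via the standard integral representation (Abramowitz--Stegun 9.6.16). The only cosmetic difference is your sign convention $t=\ln\sqrt{a/b}$ versus the paper's $t=\ln\sqrt{b/a}$, which is immaterial since $I_{0}$ is even.
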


\begin{remark}
\label{R-M-H}Let $M\left( a,b\right) $ be a homogeneous mean of positive
arguments $a$ and $b$. Then 
\begin{equation*}
M\left( a,b\right) =\sqrt{ab}M\left( e^{t},e^{-t}\right) ,
\end{equation*}%
where $t=\left( 1/2\right) \ln \left( b/a\right) $.
\end{remark}

Since the mean $M_{x^{q},0}\left( a,b\right) $ is symmetric and homogeneous
with respect to $a$ and $b$, we can assume that $b>a>0$ and let $t=\left(
1/2\right) \ln \left( b/a\right) >0$. Then by Remark \ref{R-M-H} Qi's
relations (\ref{Qi-r-1}) and (\ref{Qi-r-q}) can be rewritten as%
\begin{equation}
\frac{M_{x,0}\left( a,b\right) }{\sqrt{ab}}=\frac{2}{\pi }\int_{0}^{\pi
/2}e^{t\cos 2\theta }d\theta =I_{0}\left( t\right)  \label{Q-Y-1}
\end{equation}%
and%
\begin{equation*}
\frac{M_{x^{q},0}\left( a,b\right) }{\sqrt{ab}}=\left( \frac{2}{\pi }%
\int_{0}^{\pi /2}e^{qt\cos 2\theta }d\theta \right) ^{1/q}=I_{0}^{1/q}\left(
qt\right) .
\end{equation*}

Also, from (\ref{Q-Y-1}) it is easy to verify that 
\begin{equation}
I_{0}\left( t\right) =\frac{2}{\pi }\int_{0}^{\pi /2}\cosh \left( t\cos
\theta \right) d\theta =\frac{2}{\pi }\int_{0}^{\pi /2}\cosh \left( t\sin
\theta \right) d\theta  \label{I_0-Yang-1}
\end{equation}%
(see also \cite[p. 376, 9.6.16]{Abramowitz-HMFFGMT-1972}).

Similarly, the logarithmic mean, identric (exponential) mean and power mean
of order $p$ defined by%
\begin{eqnarray*}
L\left( a,b\right)  &=&\frac{b-a}{\ln b-\ln a},\ \ \ \mathcal{I}\left(
a,b\right) =e^{-1}\left( \frac{b^{b}}{a^{a}}\right) ^{1/\left( b-a\right) },
\\
A_{p}\left( a,b\right)  &=&\left( \frac{a^{p}+b^{p}}{2}\right) ^{1/p}\text{
if }p\neq 0\text{ and }A_{0}\left( a,b\right) =\sqrt{ab}
\end{eqnarray*}%
can be rewritten as%
\begin{eqnarray*}
\frac{L\left( a,b\right) }{\sqrt{ab}} &=&\frac{\sinh t}{t}\text{, \ }\frac{%
\mathcal{I}\left( a,b\right) }{\sqrt{ab}}=\exp \left( \frac{t}{\tanh t}%
-1\right) , \\
\frac{A_{p}\left( a,b\right) }{\sqrt{ab}} &=&\cosh pt\text{ if }p\neq 0\text{
and }\frac{A_{0}\left( a,b\right) }{\sqrt{ab}}=1.
\end{eqnarray*}%
In particular, the arithmetic and geometric means $A=A_{1}\left( a,b\right) $
and $G\left( a,b\right) =A_{0}\left( a,b\right) =\sqrt{ab}$ can be changed
into $A\left( a,b\right) /\sqrt{ab}=\cosh t$ and $G\left( a,b\right) /\sqrt{%
ab}=1$, respectively.

Further, Qi et al. showed that

\begin{theorem}[{\protect\cite[Theorem 1.1]{Qi-unpubl.-2015}}]
The double inequality%
\begin{equation}
L\left( a,b\right) <\frac{2}{\pi }\int_{0}^{\pi /2}a^{\cos ^{2}\theta
}b^{\sin ^{2}\theta }d\theta <\mathcal{I}\left( a,b\right)   \label{Qi-I-1}
\end{equation}%
holds for $a,b>0$ with $a\neq b$. Consequently,%
\begin{equation*}
L\left( a^{q},b^{q}\right) ^{1/q}<\left( \frac{2}{\pi }\int_{0}^{\pi
/2}a^{q\cos ^{2}\theta }b^{q\sin ^{2}\theta }d\theta \right) ^{1/q}<\mathcal{%
I}\left( a^{q},b^{q}\right) ^{1/q}.
\end{equation*}
\end{theorem}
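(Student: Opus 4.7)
By the homogeneity identity of Remark~\ref{R-M-H}, setting $t=\tfrac{1}{2}\ln(b/a)>0$ converts the three means in \eqref{Qi-I-1} into $\sqrt{ab}$ times functions of $t$ alone, so the claimed double inequality is equivalent to the one-variable statement
\[
\frac{\sinh t}{t}<I_0(t)<e^{t\coth t-1},\qquad t>0.
\]
All three functions equal $1$ at $t=0$, so the strategy is to compare how they leave that common value. The ``Consequently'' clause then follows by applying the first line to the pair $a^q,b^q$ and extracting the $q$th root.

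For the left-hand inequality, I would compare Maclaurin coefficients term by term: since
\[
I_0(t)=\sum_{n=0}^{\infty}\frac{t^{2n}}{4^n(n!)^2},\qquad \frac{\sinh t}{t}=\sum_{n=0}^{\infty}\frac{t^{2n}}{(2n+1)!},
\]
the inequality reduces to the coefficient-level statement $(2n+1)!\ge 4^n(n!)^2$, equivalently $(2n+1)\binom{2n}{n}\ge 4^n$. This is a Wallis-type lower bound on the central binomial coefficient (equality only at $n=0$), so $I_0(t)>\sinh t/t$ strictly for $t>0$.

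For the right-hand inequality, introduce $\phi(t)=t\coth t-1-\ln I_0(t)$. Then $\phi(0)=0$, so it will suffice to prove $\phi'(t)>0$ on $(0,\infty)$. Using $I_0'=I_1$ and the identities $2\sinh^2 t=\cosh 2t-1$, $2\sinh t\cosh t=\sinh 2t$, one computes
\[
\phi'(t)=\frac{\sinh 2t-2t}{\cosh 2t-1}-\frac{I_1(t)}{I_0(t)},
\]
so, after clearing positive denominators, the desired positivity is equivalent to the Bessel--trigonometric inequality
\[
(\sinh 2t-2t)\,I_0(t)>(\cosh 2t-1)\,I_1(t),\qquad t>0.\quad(\ast)
\]
Establishing $(\ast)$ is the main obstacle. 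My first attempt would substitute the power series of all four factors and compare the coefficient of $t^{2n+1}$ on each side: both vanish at $n=0$, and at $n=1$ the left side contributes $\tfrac{4}{3}$ against $1$ on the right, which suggests a strict term-by-term domination for every $n\ge 1$ that should, after simplification, again reduce to a central-binomial-coefficient estimate akin to the one used in the left inequality. If this coefficient computation becomes intractable, a fallback is to study the Bessel ratio $r(t)=I_1(t)/I_0(t)$ via the Riccati equation $r'=1-r/t-r^2$ (coming from the Bessel ODE $tI_0''+I_0'-tI_0=0$) and compare $r$ with the explicit auxiliary function $\rho(t)=(\sinh 2t-2t)/(\cosh 2t-1)$, matching the limits of $r-\rho$ at $t=0^+$ and $t=\infty$ and then using a single monotonicity argument to close the proof.
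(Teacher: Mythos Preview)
This theorem is quoted from \cite{Qi-unpubl.-2015}; the present paper does not give it a self-contained proof, though it does supply separate arguments implying each half (see below), so that is what your proposal should be weighed against.

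Your left inequality is fine and is a genuinely different argument from the paper's. The coefficient claim $(2n+1)!\ge 4^n(n!)^2$ is exactly $(2n+1)W_n\ge 1$ for the Wallis ratio $W_n$ of Lemma~\ref{L-W_n}; since $(2n+1)W_n/\bigl((2n-1)W_{n-1}\bigr)=(2n+1)/(2n)>1$ and the value at $n=0$ is $1$, the termwise comparison goes through. The paper instead derives $I_0(t)>(\sinh t)/t$ by applying the Chebyshev integral inequality to the representation $I_0(t)=\frac{2}{\pi}\int_0^1\frac{\cosh(tx)}{\sqrt{1-x^2}}\,dx$ (proof of Property~\ref{P-I<e^t/s}, see also Remark~5). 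Both routes are short; yours is marginally more elementary.

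The right inequality, however, has a real gap. You correctly reduce $I_0(t)<e^{t\coth t-1}$ to the Bessel--hyperbolic inequality
\[
(\sinh 2t-2t)\,I_0(t)>(\cosh 2t-1)\,I_1(t),
\]
but then stop: neither suggested attack is executed, and neither is routine. The Cauchy-product comparison at level $t^{2N+1}$ produces a sum on each side whose termwise difference does not collapse to a single Wallis-type estimate; one must reorganize the convolution before anything useful emerges. The Riccati alternative has the structural difficulty that both $r=I_1/I_0$ and $\rho=(\sinh 2t-2t)/(\cosh 2t-1)$ interpolate from $0$ to $1$ on $(0,\infty)$, so a ``single monotonicity argument'' for $r-\rho$ cannot rest on endpoint values; you would effectively need to show that $\rho$ is a strict supersolution of $r'=1-r/t-r^2$, which is another nontrivial inequality of comparable difficulty.

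The paper bypasses all of this. From Theorem~\ref{MT-TQ-A_p+G} (case $q=1$, recorded in Corollary~\ref{C-TQ-A_p+G}) one gets $I_0(t)<\tfrac12(1+\cosh t)$, i.e.\ $TQ<A_{1/2}$; then the classical Stolarsky--Pittinger bound $A_{2/3}<\mathcal{I}$ together with the monotonicity of $p\mapsto A_p$ gives $TQ<A_{1/2}<A_{2/3}<\mathcal{I}$ (Remark~6). If you want a complete argument for the upper bound, that detour through $A_{1/2}$ is much cheaper than proving $(\ast)$ directly.
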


\begin{remark}
Due to that Qi et al. first revealed the surprising connection between the
mean $M_{x,0}\left( a,b\right) $ and the modified Bessel functions of the
first kind, and established inequalities for the mean in terms of
logarithmic and identric (exponential) means, we call the mean $%
M_{x,0}\left( a,b\right) $ defined by (\ref{TQmean}) Toader-Qi mean and
denote by $TQ\left( a,b\right) $.
\end{remark}

At the end of paper \cite{Qi-unpubl.-2015}, Qi et al. gave some improvements
for the second inequality in (\ref{Qi-I-1}), that is,%
\begin{equation}
\frac{2}{\pi }\int_{0}^{\pi /2}a^{\cos ^{2}\theta }b^{\sin ^{2}\theta
}d\theta <\frac{A\left( a,b\right) +G\left( a,b\right) }{2}<\frac{2A\left(
a,b\right) +G\left( a,b\right) }{3}<\mathcal{I}\left( a,b\right) .
\label{Qi-I-2}
\end{equation}

The aim of this paper is to present some sharp inequalities for the
Toader-Qi mean $TQ\left( a,b\right) $, or equivalently, modified Bessel
functions of the first kind $I_{0}\left( t\right) $ in terms of hyperbolic
functions.

\section{Lemmas}

To formulate properties of the Toader-Qi mean $TQ\left( a,b\right) $ or $%
I_{0}\left( t\right) $ and our results, we need some lemmas.

\begin{lemma}[{\protect\cite[Problem 32]{Polya-PTA-I-CM-SV-1998}}]
\label{L-C(n,k)^2}Let $\dbinom{n}{k}$ be the number of combinations of $n$
objects taken $k$ at a time, that is,%
\begin{equation*}
\dbinom{n}{k}=\frac{n!}{k!(n-k)!}.
\end{equation*}%
Then we have%
\begin{equation*}
\sum_{k=0}^{n}\dbinom{n}{k}^{2}=\dbinom{2n}{n}.
\end{equation*}
\end{lemma}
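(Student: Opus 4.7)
The plan is to give a short generating-function (coefficient-comparison) proof, which is essentially the Vandermonde convolution specialized to equal parameters. The identity can also be read combinatorially, but the algebraic route is the most compact and the one I would write up.

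First I would invoke the binomial theorem to write
\begin{equation*}
(1+x)^{2n} \;=\; (1+x)^n\,(1+x)^n \;=\; \left(\sum_{k=0}^{n}\binom{n}{k}x^{k}\right)\!\left(\sum_{j=0}^{n}\binom{n}{j}x^{j}\right).
\end{equation*}
Next I would extract the coefficient of $x^{n}$ on both sides. On the left the coefficient is $\binom{2n}{n}$ by the binomial theorem applied to $(1+x)^{2n}$. On the right, collecting terms with $k+j=n$ gives
\begin{equation*}
\sum_{k=0}^{n}\binom{n}{k}\binom{n}{n-k},
\end{equation*}
and then using the symmetry $\binom{n}{n-k}=\binom{n}{k}$ converts this sum into $\sum_{k=0}^{n}\binom{n}{k}^{2}$. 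Equating the two expressions for $[x^{n}](1+x)^{2n}$ yields the claimed identity.

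There is essentially no obstacle here: the only subtlety worth flagging is the use of the symmetry $\binom{n}{k}=\binom{n}{n-k}$ to convert the Vandermonde-type convolution into a sum of squares. If a combinatorial proof were preferred instead, I would split a set of $2n$ objects into two distinguished halves of size $n$ and count the $\binom{2n}{n}$ ways of choosing an $n$-subset by classifying according to the number $k$ of elements taken from the first half, giving $\binom{n}{k}\binom{n}{n-k}=\binom{n}{k}^{2}$ choices for each $k$; summing over $k$ recovers the same identity.
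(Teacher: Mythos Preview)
Your proof is correct and complete; it is the standard coefficient-comparison argument (Vandermonde convolution specialized to equal upper indices), and the symmetry $\binom{n}{n-k}=\binom{n}{k}$ is exactly what is needed to pass to the sum of squares. The paper itself does not prove this lemma at all but simply quotes it from P\'olya--Szeg\H{o}, so there is nothing to compare your argument against in the paper.
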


\begin{lemma}[{\protect\cite[Problems 85, 94]{Polya-PTA-I-CM-SV-1998}}]
\label{L-lima_n/b_n}The two given sequences $\{a_{n}\}_{n\geq 0}$ and $%
\{b_{n}\}_{n\geq 0}$ satisfy the conditions%
\begin{equation*}
b_{n}>0\text{; \ }\sum_{n=0}^{\infty }b_{n}t^{n}\text{ converges for all
values of }t\text{; \ }\lim_{n\rightarrow \infty }\frac{a_{n}}{b_{n}}=s.
\end{equation*}%
Then $\sum_{n=0}^{\infty }a_{n}t^{n}$ converges too for all values of $t$
and in addition%
\begin{equation*}
\lim_{t\rightarrow \infty }\frac{\sum_{n=0}^{\infty }a_{n}t^{n}}{%
\sum_{n=0}^{\infty }b_{n}t^{n}}=s.
\end{equation*}
\end{lemma}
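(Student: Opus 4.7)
The plan is to treat this as a power-series analogue of the elementary fact that if $a_n/b_n \to s$ and $\sum b_n = +\infty$ with $b_n>0$, then the Stolz-type ratio of partial sums tends to $s$. Here the role of the partial sums is played by the values of the entire functions at $t$, and the role of "divergence" is played by $\sum b_n t^n \to \infty$ as $t \to \infty$, which is guaranteed because every $b_n$ is strictly positive (so for any fixed $N\ge 1$, $\sum_n b_n t^n \ge b_N t^N \to \infty$).

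First I would dispose of the convergence claim. Since $a_n/b_n \to s$, the ratio is bounded, say $|a_n|\le M b_n$ for all $n\ge 0$. Combined with the hypothesis that $\sum b_n t^n$ converges for every $t$, this gives absolute convergence of $\sum a_n t^n$ for every $t$ by direct comparison. So the quotient in the statement is well defined for all sufficiently large $t$.

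For the limit, I would fix $\varepsilon>0$, pick $N$ so that $|a_n-s b_n|\le \varepsilon b_n$ for every $n\ge N$, and then write
\begin{equation*}
\frac{\sum_{n=0}^\infty a_n t^n}{\sum_{n=0}^\infty b_n t^n}-s
=\frac{\sum_{n=0}^{N-1}(a_n-sb_n)\,t^n}{\sum_{n=0}^\infty b_n t^n}
+\frac{\sum_{n=N}^\infty (a_n-sb_n)\,t^n}{\sum_{n=0}^\infty b_n t^n}.
\end{equation*}
The first term has a polynomial numerator of degree at most $N-1$ and a denominator that dominates $b_N t^N$, so it tends to $0$ as $t\to\infty$. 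The second term is bounded in absolute value by $\varepsilon$ because, for $t>0$,
\begin{equation*}
\Bigl|\sum_{n=N}^\infty(a_n-sb_n)t^n\Bigr|\le \varepsilon\sum_{n=N}^\infty b_n t^n \le \varepsilon\sum_{n=0}^\infty b_n t^n.
\end{equation*}
Letting $t\to\infty$ then $\varepsilon\to 0$ yields the result.

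The only subtle point (and the "main obstacle," if there is one) is verifying that the denominator really does grow without bound; this is why the positivity assumption $b_n>0$ is used rather than just $b_n\ne 0$, and why the hypothesis of convergence for all $t$ (not just $|t|<R$) matters — it lets us push $t$ to infinity while keeping every tail meaningful. Everything else is a clean $\varepsilon/2$-type estimate once the decomposition above is in place.
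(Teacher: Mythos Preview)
Your argument is correct and is essentially the standard proof of this classical result. Note, however, that the paper does not give its own proof of this lemma at all: it is quoted verbatim as a known fact from P\'{o}lya--Szeg\H{o} (Problems 85 and 94) and used as a black box, so there is nothing in the paper to compare your proof against beyond the citation itself.
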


\begin{lemma}
\label{L-W_n}The Wallis ratio $W_{n}$ defined by%
\begin{equation}
W_{n}=\frac{\left( 2n-1\right) !!}{\left( 2n\right) !!}=\frac{\left(
2n\right) !}{2^{2n}n!^{2}}=\frac{\Gamma \left( n+1/2\right) }{\Gamma \left(
1/2\right) \Gamma \left( n+1\right) }  \label{W_n}
\end{equation}%
is strictly decreasing and log-convex for all integers $n\geq 0$.
\end{lemma}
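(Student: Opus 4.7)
The plan is to exploit the double-factorial form of $W_n$ in (\ref{W_n}), since it makes consecutive ratios collapse to something trivial.

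First I would establish strict monotonicity. From $W_n=(2n-1)!!/(2n)!!$, a direct computation gives
\begin{equation*}
\frac{W_{n+1}}{W_{n}}=\frac{(2n+1)!!/(2n+2)!!}{(2n-1)!!/(2n)!!}=\frac{2n+1}{2n+2}<1
\end{equation*}
for every integer $n\geq 0$, so $W_{n+1}<W_{n}$. This handles the ``strictly decreasing'' half with no further work.

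For log-convexity, I would reduce the inequality $W_{n+1}^{2}\leq W_{n}W_{n+2}$ to showing that the ratio $W_{n+1}/W_{n}$ is itself nondecreasing in $n$, i.e.
\begin{equation*}
\frac{W_{n+2}}{W_{n+1}}\geq \frac{W_{n+1}}{W_{n}}.
\end{equation*}
Using the formula from the previous step, this becomes $\frac{2n+3}{2n+4}\geq \frac{2n+1}{2n+2}$, which after cross-multiplication reads $(2n+3)(2n+2)\geq (2n+1)(2n+4)$, i.e. $4n^{2}+10n+6\geq 4n^{2}+10n+4$. The difference is $2>0$, so the inequality holds strictly for every $n\geq 0$, giving strict log-convexity.

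I do not anticipate any real obstacle: once the double-factorial form is chosen, both claims reduce to one-line comparisons of linear expressions in $n$. The only thing worth checking is the boundary case $n=0$ (where $(2n-1)!! $ should be interpreted as $(-1)!!=1$), but the ratio identity $W_{n+1}/W_{n}=(2n+1)/(2n+2)$ already covers it, since $W_{0}=1$ and $W_{1}=1/2$, consistent with $W_{1}/W_{0}=1/2$.
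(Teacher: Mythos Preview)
Your proposal is correct and follows essentially the same route as the paper: both compute $W_{n+1}/W_n=(2n+1)/(2n+2)<1$ for monotonicity, and then verify that this ratio increases in $n$ for log-convexity. The paper writes the ratio-of-ratios as $1+\dfrac{1}{(n+2)(2n+1)}>1$, while you cross-multiply to get $4n^2+10n+6>4n^2+10n+4$; these are the same elementary check.
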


\begin{proof}
Direct computations gives%
\begin{eqnarray}
\frac{W_{n+1}}{W_{n}} &=&\frac{2n+1}{2n+2}=1-\frac{1}{2n+2}<1,
\label{W_n+1-W_n} \\
\left. \frac{W_{n+2}}{W_{n+1}}\right/ \frac{W_{n+1}}{W_{n}} &=&\frac{2n+3}{%
2n+4}\frac{2n+2}{2n+1}=1+\frac{1}{\left( n+2\right) \left( 2n+1\right) }>1, 
\notag
\end{eqnarray}%
which proves the lemma.
\end{proof}

\begin{lemma}[{\protect\cite{Wendel-AMM-55-1948}, \protect\cite[(2.8)]%
{Qi-JIA-2010-493058}}]
\label{L-g(x+a)/g(x+1)>}For all $x>0$ and all $a\in \left( 0,1\right) $, it
holds that%
\begin{equation}
\frac{1}{\left( x+a\right) ^{1-a}}<\frac{\Gamma \left( x+a\right) }{\Gamma
\left( x+1\right) }<\frac{1}{x^{1-a}}.  \label{L-gi><}
\end{equation}
\end{lemma}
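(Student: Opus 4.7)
The plan is to derive both bounds from a single structural property of the Gamma function, namely log-convexity on $(0,\infty)$ (which follows from the Bohr--Mollerup theorem, or directly from the integral representation via H\"older's inequality). I would avoid asymptotic expansions or Stirling-type estimates; instead, two applications of log-convexity at carefully chosen convex combinations should yield the matching upper and lower bounds in one short argument.

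For the upper bound I would write $x+a$ as the convex combination $x+a = (1-a)x + a(x+1)$ and invoke log-convexity to get
\[
\Gamma(x+a) \leq \Gamma(x)^{1-a}\,\Gamma(x+1)^{a}.
\]
Substituting the functional equation $\Gamma(x)=\Gamma(x+1)/x$ on the right collapses this to $\Gamma(x+a) \leq \Gamma(x+1)/x^{1-a}$, which rearranges to the right inequality of \eqref{L-gi><}.

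For the lower bound I would use the complementary convex combination $x+1 = a(x+a) + (1-a)(x+a+1)$. Log-convexity then gives
\[
\Gamma(x+1) \leq \Gamma(x+a)^{a}\,\Gamma(x+a+1)^{1-a} = (x+a)^{1-a}\,\Gamma(x+a),
\]
where I used $\Gamma(x+a+1)=(x+a)\Gamma(x+a)$. Rearranging produces the left inequality of \eqref{L-gi><}.

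The only point that requires attention, and not really a genuine obstacle, is strictness: since $a\in(0,1)$, the interpolation nodes in each convex combination are distinct, so strict log-convexity of $\log\Gamma$ (equivalently $\psi'>0$ on $(0,\infty)$) upgrades both $\leq$'s to $<$'s, matching the form stated in the lemma.
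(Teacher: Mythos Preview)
Your proof is correct. The paper does not supply its own proof of this lemma; it simply cites Wendel~\cite{Wendel-AMM-55-1948} and Qi~\cite{Qi-JIA-2010-493058}. Your argument via log-convexity of $\Gamma$ is exactly Wendel's original 1948 proof (he phrased it as H\"older's inequality applied to the integral representation, which is the same thing), so you have reproduced the classical argument faithfully, including the strictness via strict convexity of $\log\Gamma$.
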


\begin{lemma}
\label{L-s_n}The sequence $\{s_{n}\}_{n\geq 0}$ defined by%
\begin{equation}
s_{n}=\frac{\left( 2n\right) !\left( 2n+1\right) !}{2^{4n}n!^{4}}
\label{s_n}
\end{equation}%
is strictly decreasing, and $\lim_{n\rightarrow \infty }s_{n}=2/\pi $.
\end{lemma}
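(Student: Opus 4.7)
The plan is to reduce everything to the Wallis ratio $W_{n}$ studied in Lemma~\ref{L-W_n}. Since $(2n+1)! = (2n+1)\cdot(2n)!$, the definition of $s_{n}$ factorizes cleanly as
\[
s_{n} = (2n+1)\left(\frac{(2n)!}{2^{2n}n!^{2}}\right)^{2} = (2n+1)\,W_{n}^{2}.
\]
This rewriting is the only substantive step; both claims become short once it is in hand.

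For strict monotonicity I would compute
\[
\frac{s_{n+1}}{s_{n}} = \frac{2n+3}{2n+1}\left(\frac{W_{n+1}}{W_{n}}\right)^{2} = \frac{2n+3}{2n+1}\left(\frac{2n+1}{2n+2}\right)^{2} = \frac{(2n+1)(2n+3)}{(2n+2)^{2}},
\]
using the explicit ratio $W_{n+1}/W_{n} = (2n+1)/(2n+2)$ already recorded in (\ref{W_n+1-W_n}). Since $(2n+1)(2n+3) = (2n+2)^{2}-1$, the ratio is visibly strictly less than $1$, so $\{s_{n}\}$ strictly decreases.

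For the limit I would apply Lemma~\ref{L-g(x+a)/g(x+1)>} with $x=n$ and $a=1/2$. Using $\Gamma(1/2)=\sqrt{\pi}$ and the gamma-function expression for $W_{n}$ in (\ref{W_n}), this immediately sandwiches
\[
\frac{1}{\sqrt{\pi(n+1/2)}} < W_{n} < \frac{1}{\sqrt{\pi n}}.
\]
Squaring and multiplying by $2n+1$ gives
\[
\frac{2}{\pi} \;=\; \frac{2n+1}{\pi(n+1/2)} \;<\; s_{n} \;<\; \frac{2n+1}{\pi n},
\]
so the squeeze theorem yields $\lim_{n\to\infty}s_{n} = 2/\pi$. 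As a bonus, the lower bound $s_{n}>2/\pi$ is sharp for the decreasing sequence and will likely be useful later in the paper.

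There is essentially no obstacle here: once the identity $s_{n}=(2n+1)W_{n}^{2}$ is spotted, the decreasing property reduces to an elementary algebraic inequality and the limit follows directly from Wendel's inequality (Lemma~\ref{L-g(x+a)/g(x+1)>}).
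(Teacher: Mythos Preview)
Your proof is correct and follows essentially the same route as the paper: both factor $s_{n}=(2n+1)W_{n}^{2}$, compute the identical ratio $s_{n+1}/s_{n}=(2n+1)(2n+3)/(2n+2)^{2}<1$, and obtain the limit by squeezing $W_{n}=\Gamma(n+1/2)/(\sqrt{\pi}\,\Gamma(n+1))$ via Wendel's inequality (Lemma~\ref{L-g(x+a)/g(x+1)>}). The only cosmetic difference is that you phrase the bounds in terms of $W_{n}$ while the paper writes them directly for the gamma quotient.
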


\begin{proof}
An easy computation yields%
\begin{equation}
\frac{s_{n+1}}{s_{n}}=\left. \frac{\left( 2n+2\right) !\left( 2n+3\right) !}{%
2^{4n+4}\left( n+1\right) !^{4}}\right/ \frac{\left( 2n\right) !\left(
2n+1\right) !}{2^{4n}n!^{4}}=\frac{1}{4}\frac{\left( 2n+1\right) \left(
2n+3\right) }{\left( n+1\right) ^{2}}<1,  \label{s_n-rec-rel}
\end{equation}%
which shows that the sequence $\{s_{n}\}$ is strictly decreasing for all $%
n\geq 0$. To calculate $\lim_{n\rightarrow \infty }s_{n}$, we write $s_{n}$
as%
\begin{equation*}
s_{n}=\frac{\left( 2n\right) !\left( 2n+1\right) !}{2^{4n}n!^{4}}=\left(
2n+1\right) \left( \frac{\left( 2n-1\right) !!}{2^{n}n!}\right) ^{2}=\frac{%
2n+1}{\Gamma \left( 1/2\right) ^{2}}\left( \frac{\Gamma \left( n+1/2\right) 
}{\Gamma \left( n+1\right) }\right) ^{2},
\end{equation*}%
and it suffices to prove that%
\begin{equation*}
\lim_{n\rightarrow \infty }\frac{\left( n+1/2\right) \Gamma \left(
n+1/2\right) ^{2}}{\Gamma \left( n+1\right) ^{2}}=1.
\end{equation*}%
Making use of Lemma \ref{L-g(x+a)/g(x+1)>} yields%
\begin{equation*}
1=\frac{n+1/2}{n+1/2}<\frac{\left( n+1/2\right) \Gamma \left( n+1/2\right)
^{2}}{\Gamma \left( n+1\right) ^{2}}<\frac{n+1/2}{n},
\end{equation*}%
which implies the desired assertion.
\end{proof}

\begin{lemma}[\protect\cite{Biernacki-AUMC-S-9-1955}]
\label{L-A/B}Let $A\left( t\right) =\sum_{k=0}^{\infty }a_{k}t^{k}$ and $%
B\left( t\right) =\sum_{k=0}^{\infty }b_{k}t^{k}$ be two real power series
converging on $\left( -r,r\right) $ ($r>0$) with $b_{k}>0$ for all $k$. If
the sequence $\{a_{k}/b_{k}\}\ $is increasing (decreasing) for all $k$, then
the function $t\mapsto A\left( t\right) /B\left( t\right) $ is also
increasing (decreasing) on $\left( 0,r\right) $.
\end{lemma}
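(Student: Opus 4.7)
The plan is to reduce monotonicity of $A/B$ to nonnegativity of the power-series coefficients of $A'B-AB'$. Since $b_{k}>0$ for every $k$, we have $B(t)>0$ on $(0,r)$, so the sign of $(A/B)'$ coincides with that of $A'(t)B(t)-A(t)B'(t)$ and it suffices to control the latter.

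First I would compute the Cauchy product:
\[ A'(t)B(t)-A(t)B'(t) = \sum_{n\geq 0}\Biggl[\sum_{i+j=n+1}(i-j)\,a_{i}b_{j}\Biggr]t^{n}, \]
since differentiating $A$ (respectively $B$) brings down the factor $i$ (respectively $j$), after which the $i-1$ (resp.\ $j-1$) and the surviving index must add up to $n$. The critical move is to symmetrize the inner bracket: the factor $(i-j)$ is antisymmetric under the swap $(i,j)\leftrightarrow(j,i)$ and the diagonal $i=j$ (possible only when $n$ is odd) contributes zero, so pairing each index with its swap rewrites the coefficient of $t^{n}$ as
\[ \sum_{\substack{i+j=n+1\\ i>j}}(i-j)\,b_{i}b_{j}\left(\frac{a_{i}}{b_{i}}-\frac{a_{j}}{b_{j}}\right). \]

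Under the hypothesis that $\{a_{k}/b_{k}\}$ is increasing, every factor in each summand is nonnegative for $i>j$, so $A'B-AB'$ is a power series with nonnegative coefficients and is therefore $\geq 0$ on $(0,r)$. Consequently $(A/B)'\geq 0$ on $(0,r)$, so $A/B$ is increasing there; the decreasing case is identical with reversed inequalities. The main obstacle I anticipate is purely bookkeeping: tracking the index shifts carefully in the Cauchy product and checking that the diagonal term truly drops out. Once the symmetrized form above is in hand, the sign conclusion and hence the monotonicity of $A/B$ follow immediately from the hypothesis on the ratios $a_{k}/b_{k}$.
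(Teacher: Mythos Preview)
Your argument is correct. The symmetrization
\[
[t^{n}]\bigl(A'B-AB'\bigr)=\sum_{\substack{i+j=n+1\\ i>j}}(i-j)\,b_{i}b_{j}\Bigl(\tfrac{a_{i}}{b_{i}}-\tfrac{a_{j}}{b_{j}}\Bigr)
\]
is exactly the standard device used to prove this result, and your index bookkeeping (the shift from the derivative, the vanishing diagonal when $n$ is odd) is accurate.

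Note, however, that the paper does not actually supply a proof of this lemma: it is quoted with a citation to Biernacki and Krzy\.{z} and used as a black box. So there is no ``paper's own proof'' to compare against; you have simply filled in what the paper leaves to the reference, and the route you take is the classical one.
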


\begin{lemma}[{\protect\cite[Corollary 2.3.]{Yang-JMAA-428-2015}}]
\label{L-A/B-g}Let $A\left( t\right) =\sum_{k=0}^{\infty }a_{k}t^{k}$ and $%
B\left( t\right) =\sum_{k=0}^{\infty }b_{k}t^{k}$ be two real power series
converging on $\mathbb{R}$ with $b_{k}>0$ for all $k$. If for certain $m\in 
\mathbb{N}$, the non-constant sequence $\{a_{k}/b_{k}\}$ is increasing
(decreasing) for $0\leq k\leq m$ and decreasing (increasing) for $k\geq m$,
then there is a unique $t_{0}\in \left( 0,\infty \right) $ such that the
function $A/B$ is increasing (decreasing) on $\left( 0,t_{0}\right) $ and
decreasing (increasing) on $\left( t_{0},\infty \right) $.
\end{lemma}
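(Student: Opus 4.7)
The plan is to analyze the sign of the derivative $(A/B)'(t)$ via its numerator. Consider without loss of generality the case where $c_k := a_k/b_k$ is increasing on $[0,m]$ and decreasing on $[m,\infty)$; the other case follows by replacing $A$ with $-A$. Set $R(t) = A(t)/B(t)$ and note that $B(t) > 0$ on $[0,\infty)$ since $b_k > 0$. Hence $R'(t)$ has the same sign as $H(t) := A'(t)B(t) - A(t)B'(t)$, and it suffices to prove that $H$ has exactly one zero $t_0 \in (0,\infty)$, with $H > 0$ on $(0,t_0)$ and $H < 0$ on $(t_0,\infty)$.

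I first expand $H$ as a power series. A Cauchy-product computation gives $H(t) = \sum_{n \geq 0} h_n t^n$ with $h_n = \sum_{i+j=n+1}(i-j) a_i b_j$. Substituting $a_i = c_i b_i$ and pairing $(i, n+1-i)$ via the antisymmetry of $(i-j)$ under swap yields
\[
h_n = \sum_{0 \leq i < (n+1)/2}(n+1-2i)\, b_i b_{n+1-i}\,(c_{n+1-i} - c_i).
\]
The key claim, which is the main technical obstacle, is that the sequence $\{h_n\}$ has exactly one sign change: there exists $N$ for which $h_n \geq 0$ when $n \leq N$ and $h_n \leq 0$ when $n > N$. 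The intuition is clean at the extremes: for $n \leq m-1$, both indices of every pair lie in $[0,m]$, so $c_{n+1-i} > c_i$ and $h_n > 0$; for large $n$, pairs eventually lie wholly in $[m,\infty)$ where $c$ is decreasing, forcing $h_n < 0$. The delicate middle range, where pairs straddle the peak, calls for a careful weighted balance using the positivity of the $b_k$.

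Granting the sign-change property, uniqueness of the zero follows from an elementary factorization. Write $H(t) = G(t) - t^{N+1}F(t)$ where $G(t) = \sum_{n \leq N} h_n t^n$ and $F(t) = -\sum_{n > N} h_n t^{n-N-1}$ both have non-negative coefficients, so that
\[
\frac{H(t)}{t^{N+1}} = \frac{G(t)}{t^{N+1}} - F(t).
\]
The first term is strictly decreasing on $(0,\infty)$: its derivative equals $t^{-N-2}\sum_{n \leq N}(n-N-1)h_n t^n$, where every factor $(n-N-1) < 0$ and every $h_n \geq 0$, and the sum is not identically zero. The second term is strictly increasing because $F$ has positive coefficients somewhere. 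Hence $H$ has at most one positive zero.

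To complete the argument, one uses boundary behavior: $H(0) = h_0 = b_0 b_1 (c_1 - c_0) > 0$ when $m \geq 1$ (the degenerate case $m = 0$ reduces to Biernacki's Lemma~\ref{L-A/B}), so $H > 0$ near $0$; and as $t \to \infty$, Lemma~\ref{L-lima_n/b_n} gives $R(t) \to c_\infty$, while the strict unimodality of $\{c_k\}$ forces $R$ to exceed $c_\infty$ at some interior point, so $R$ is eventually strictly decreasing, making $H < 0$ near infinity. These force $H$ to have exactly one positive zero $t_0$, and the sign of $H$ on each side of $t_0$ yields the desired unimodality of $A/B$.
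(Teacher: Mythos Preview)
The paper does not supply its own proof of this lemma; it is quoted from \cite{Yang-JMAA-428-2015}. Your proposal can nonetheless be assessed on its own merits, and it contains a genuine gap.

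Your reduction to the sign of $H(t)=A'B-AB'$ and the coefficient formula
\[
h_n=\sum_{0\le i<(n+1)/2}(n+1-2i)\,b_i\,b_{n+1-i}\bigl(c_{n+1-i}-c_i\bigr)
\]
are both correct, and the passage from ``$\{h_n\}$ has exactly one sign change'' to ``$H$ has exactly one positive zero'' is sound. The fatal problem is the sign-change claim itself: you explicitly leave it unproven, and it is in fact \emph{false} under the stated hypotheses. Take $m=1$ with $c_0=0$, $c_k=11-k$ for $k\ge 1$ (so $c_0<c_1>c_2>\cdots$), and $b_0=b_1=b_2=b_4=1$, $b_3=\varepsilon$ small, $b_k=1/k!$ for $k\ge 5$. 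Both series converge on all of $\mathbb{R}$, all $b_k>0$, yet
\[
h_0=10,\qquad h_1=18,\qquad h_2=24\varepsilon-1<0,\qquad h_3=28-4\varepsilon>0,
\]
already two sign changes (and $h_4=\tfrac14-9-\varepsilon<0$ gives a third). Thus no ``careful weighted balance using the positivity of the $b_k$'' can rescue this step: positivity of the $b_k$ alone does not force a single sign change in $\{h_n\}$, and the outline cannot be completed along the route you propose. The argument in \cite{Yang-JMAA-428-2015} does not attempt to control the coefficient sequence of $H$ directly.

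A secondary issue: your endpoint analysis at $t\to\infty$ asserts that $R$ exceeds $c_\infty:=\lim_{k}c_k$ at some interior point, but you give no justification for this, and $c_\infty$ need not be finite under the hypotheses, so Lemma~\ref{L-lima_n/b_n} may not apply as stated.
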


\section{Properties}

Now we give some simple properties of the Toader-Qi mean $TQ\left(
a,b\right) $ or $I_{0}\left( t\right) $.

By the identities (\ref{I_0-Yang-1}), the following property is immediate.

\begin{property}
\label{P-G-TQ-A}For $t>0$, it holds that%
\begin{equation*}
1<I_{0}\left( t\right) <\cosh t,
\end{equation*}%
or equivalently, the double inequality%
\begin{equation}
\sqrt{ab}<TQ\left( a,b\right) <\frac{a+b}{2}  \label{G-TQ-A}
\end{equation}%
holds for $a,b>0$ with $a\neq b$.
\end{property}

Making a change of variable $\sin \theta =x$ in the second identity of (\ref%
{I_0-Yang-1}) yields

\begin{property}
We have%
\begin{equation}
I_{0}\left( t\right) =\frac{2}{\pi }\int_{0}^{1}\frac{\cosh \left( tx\right) 
}{\sqrt{1-x^{2}}}dx  \label{I_0-Yang-2}
\end{equation}%
(see also \cite[p. 376, 9.6.18]{Abramowitz-HMFFGMT-1972}).
\end{property}

\begin{property}
\label{P-I<e^t/s}For $t>0$, it holds that%
\begin{equation}
\frac{e^{t}}{1+2t}<I_{0}\left( t\right) <\frac{e^{t}}{\sqrt{1+2t}},
\label{I-e^t}
\end{equation}%
or equivalently,%
\begin{equation}
\frac{b}{1+\ln \left( b/a\right) }<TQ\left( a,b\right) <\frac{b}{\sqrt{1+\ln
\left( b/a\right) }}  \label{TQ-b}
\end{equation}%
holds for $b>a>0$. Consequently, we have%
\begin{equation}
\lim_{t\rightarrow \infty }e^{-t}I_{0}\left( t\right) =0  \label{L=0}
\end{equation}%
or%
\begin{equation}
\lim_{x\rightarrow 0^{+}}TQ\left( x,1\right) =0.  \label{TQ(0+,1)=0}
\end{equation}
\end{property}

\begin{proof}
An easy verification shows that both the functions $1/\sqrt{1-x^{2}}$ and $%
\cosh \left( tx\right) $ are increasing with respect to $x$ on $\left[ 0,1%
\right] $. Using the Chebyshev integral inequality to the formula (\ref%
{I_0-Yang-2}) we get%
\begin{equation*}
I_{0}\left( t\right) >\frac{2}{\pi }\int_{0}^{1}\frac{dx}{\sqrt{1-x^{2}}}%
\int_{0}^{1}\cosh \left( tx\right) dx=\frac{\sinh t}{t}=\frac{e^{t}}{2t}%
\left( 1-\frac{1}{e^{2t}}\right) >\frac{e^{t}}{2t+1},
\end{equation*}%
where the last inequality holds due to $e^{2t}>1+2t$, which proves the first
inequality in (\ref{I-e^t}).

On the other hand, we have%
\begin{eqnarray*}
e^{-t}I_{0}\left( t\right) &=&\frac{2}{\pi }\int_{0}^{\pi /2}e^{-t}e^{t\cos
2\theta }d\theta =\frac{2}{\pi }\int_{0}^{\pi /2}\frac{d\theta }{e^{2t\sin
^{2}\theta }}<\int_{0}^{\pi /2}\frac{d\theta }{1+2t\sin ^{2}\theta } \\
&=&\frac{2}{\pi }\left[ \frac{\arctan \left( \sqrt{1+2t}\tan \theta \right) 
}{\sqrt{1+2t}}\right] _{\theta =0}^{\theta =\pi /2}=\frac{1}{\sqrt{1+2t}},
\end{eqnarray*}%
which proves the second inequality in (\ref{I-e^t}). From the double
inequality (\ref{I-e^t}) the limit relation (\ref{L=0}) easily follows.

Substituting $t=\left( 1/2\right) \ln \left( b/a\right) $ $\left(
b>a>0\right) $ into (\ref{I-e^t}) and (\ref{L=0}) gives (\ref{TQ-b}) and (%
\ref{TQ(0+,1)=0}).
\end{proof}

\begin{remark}
By the limit relation (\ref{TQ(0+,1)=0}) and homogeneous of $TQ\left(
a,b\right) $ with respect to positive numbers $a$ and $b$, we see that the
Toader-Qi mean $TQ\left( a,b\right) $ can be extended continuously to the
domain $\{\left( a,b\right) |a,b\geq 0\}$.
\end{remark}

\begin{property}
\label{P-I_0^2}We have%
\begin{equation}
I_{0}\left( t\right) ^{2}=\sum_{n=0}^{\infty }\frac{\left( 2n\right) !}{%
2^{2n}n!^{4}}t^{2n}.  \label{I_0^2}
\end{equation}
\end{property}

\begin{proof}
By Cauchy product formula and Lemma \ref{L-C(n,k)^2}, it is obtained that%
\begin{eqnarray*}
I_{0}\left( t\right) ^{2} &=&\sum_{n=0}^{\infty }\left( \sum_{k=0}^{n}\frac{1%
}{2^{2k}k!^{2}}\frac{1}{2^{2\left( n-k\right) }\left( n-k\right) !^{2}}%
\right) t^{2n} \\
&=&\sum_{n=0}^{\infty }\left( \frac{1}{2^{2n}n!^{2}}\sum_{k=0}^{n}\frac{%
n!^{2}}{k!^{2}\left( n-k\right) !^{2}}\right) t^{2n}=\sum_{n=0}^{\infty }%
\frac{\left( 2n\right) !}{2^{2n}n!^{4}}t^{2n}.
\end{eqnarray*}
\end{proof}

\begin{property}
\label{P-Tq-L_2}The function%
\begin{equation*}
t\mapsto I_{0}\left( t\right) \left/ \sqrt{\frac{\sinh 2t}{2t}}\right. 
\end{equation*}%
is strictly decreasing from $\left( 0,\infty \right) $ onto $\left( \sqrt{%
2/\pi },1\right) $. Consequently, the double inequality%
\begin{equation}
\sqrt{\frac{\sinh 2t}{\pi t}}<I_{0}\left( t\right) <\sqrt{\frac{\sinh 2t}{2t}%
}  \label{I_0-sh2t/2t}
\end{equation}%
holds for $t>0$, or equivalently, the double inequality 
\begin{equation}
\sqrt{\frac{2}{\pi }}\sqrt{L\left( a,b\right) A\left( a,b\right) }<TQ\left(
a,b\right) <\sqrt{L\left( a,b\right) A\left( a,b\right) }
\label{I_0-sqr(LA)}
\end{equation}%
holds for $a,b>0$ with $a\neq b$, where $\sqrt{2/\pi }$ and $1$ are the best
possible.
\end{property}

\begin{proof}
Using the identity (\ref{I_0^2}) we have%
\begin{equation*}
R_{0}\left( t\right) :=\frac{I_{0}\left( t\right) ^{2}}{\left( \sinh
2t\right) /\left( 2t\right) }=\frac{\sum_{n=0}^{\infty }\frac{\left(
2n\right) !}{2^{2n}n!^{4}}t^{2n}}{\sum_{n=0}^{\infty }\frac{2^{2n}}{\left(
2n+1\right) !}t^{2n}}:=\frac{\sum_{n=0}^{\infty }a_{n}t^{2n}}{%
\sum_{n=0}^{\infty }b_{n}t^{2n}}.
\end{equation*}%
It is obvious that%
\begin{equation*}
\frac{a_{n}}{b_{n}}=\left. \frac{\left( 2n\right) !}{2^{2n}n!^{4}}\right/ 
\frac{2^{2n}}{\left( 2n+1\right) !}=\frac{\left( 2n\right) !\left(
2n+1\right) !}{2^{4n}n!^{4}}=s_{n}.
\end{equation*}%
By Lemma \ref{L-s_n} it follows that the sequence $\{a_{n}/b_{n}\}$ is
strictly decreasing for all integers $n\geq 0$, so is the function $R_{0}$
on $\left( 0,\infty \right) $ by Lemma \ref{L-A/B}. Consequently, it is
obtained that%
\begin{equation*}
\frac{2}{\pi }=\lim_{t\rightarrow \infty }R_{0}\left( t\right) <R_{0}\left(
t\right) =\lim_{t\rightarrow 0}R_{0}\left( t\right) =1,
\end{equation*}%
where the first equality holds due to%
\begin{equation}
\lim_{t\rightarrow \infty }R_{0}\left( t\right) =\lim_{t\rightarrow \infty }%
\frac{\sum_{n=0}^{\infty }a_{n}t^{2n}}{\sum_{n=0}^{\infty }b_{n}t^{2n}}%
=\lim_{n\rightarrow \infty }\frac{a_{n}}{b_{n}}=\lim_{n\rightarrow \infty }%
\frac{\left( 2n\right) !\left( 2n+1\right) !}{2^{4n}n!^{4}}=\frac{2}{\pi }
\label{limI_0-sh2t/2t}
\end{equation}%
by Lemmas \ref{L-lima_n/b_n} and \ref{L-s_n}.

This completes the proof.
\end{proof}

\begin{remark}
The limit relation \ref{limI_0-sh2t/2t} implies that%
\begin{equation*}
\lim_{t\rightarrow \infty }\sqrt{t}e^{-t}I_{0}\left( t\right) =\frac{1}{%
\sqrt{2\pi }}\text{ or }I_{0}\left( t\right) \sim \frac{e^{t}}{\sqrt{2\pi t}}%
\text{ as }t\rightarrow \infty
\end{equation*}%
(see also \cite[p. 377, 9.7.1]{Abramowitz-HMFFGMT-1972}).
\end{remark}

\section{Main results}

Due to Remark \ref{R-M-H}, almost all of inequalities for homogeneous
symmetric bivariate means can be transformed equivalently into the
corresponding ones for hyperbolic functions and vice versa, for example, the
double inequalities (\ref{I_0-sh2t/2t}) and (\ref{I_0-sqr(LA)}) are
equivalent to each other. Therefore, for convenience, we only present sharp
inequalities for the modified Bessel functions of the first kind $%
I_{0}\left( t\right) $ in terms of hyperbolic functions in this section.

\begin{theorem}
\label{MT-TQ-sqrLA-w}The double inequality%
\begin{equation}
\sqrt{\left( \lambda \cosh t+1-\lambda \right) \frac{\sinh t}{t}}%
<I_{0}\left( t\right) <\sqrt{\left( \delta \cosh t+1-\delta \right) \frac{%
\sinh t}{t}}  \label{I-TQ-sqrLA-w}
\end{equation}%
holds for all $t>0$ if and only $\lambda \in \left[ 0,2/\pi \right] $ and $%
\delta \in \lbrack \delta _{0},\infty )$, where $\delta _{0}\approx 0.67664$
is defined by%
\begin{equation*}
\delta _{0}=\frac{t_{0}I_{0}\left( t_{0}\right) ^{2}-\sinh t_{0}}{\left(
\cosh t_{0}-1\right) \sinh t_{0}},
\end{equation*}%
here $t_{0}$ is the unique solution of the equation%
\begin{equation*}
\frac{d}{dt}\left( \frac{tI_{0}\left( t\right) ^{2}-\sinh t}{\left( \cosh
t-1\right) \sinh t}\right) =0
\end{equation*}%
on $\left( 0,\infty \right) $.
\end{theorem}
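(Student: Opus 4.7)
Squaring the desired inequality and rearranging, I would first observe that the double inequality is equivalent to
\[
\lambda < f(t) := \frac{tI_{0}(t)^{2} - \sinh t}{(\cosh t - 1)\sinh t} < \delta, \qquad t > 0,
\]
so the task reduces to determining $\inf_{t>0} f(t)$ and $\sup_{t>0} f(t)$. My plan is to express $f$ as a quotient of two power series on $\mathbb{R}$ and then invoke Lemma \ref{L-A/B-g}.

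Using Property \ref{P-I_0^2} together with the identity $(\cosh t - 1)\sinh t = \tfrac{1}{2}\sinh 2t - \sinh t$, the Cauchy product yields
\[
f(t) = \frac{\sum_{n\geq 1} a_{n} t^{2n+1}}{\sum_{n\geq 1} b_{n} t^{2n+1}}, \quad a_{n} = \frac{(2n)!}{2^{2n}(n!)^{4}} - \frac{1}{(2n+1)!}, \quad b_{n} = \frac{2^{2n}-1}{(2n+1)!},
\]
with the $n=0$ coefficients both vanishing. After a short algebraic simplification the key ratio becomes
\[
c_{n} := \frac{a_{n}}{b_{n}} = \frac{2^{2n} s_{n} - 1}{2^{2n} - 1},
\]
where $s_{n}$ is the sequence of Lemma \ref{L-s_n}. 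Direct evaluation gives $c_{1} = 2/3$ and $c_{2} = 41/60$, so $c_{1} < c_{2}$, and since $s_{n} \to 2/\pi$ by Lemma \ref{L-s_n}, the limit $\lim c_{n} = 2/\pi$ follows at once. The analytic heart of the argument is to show that $\{c_{n}\}$ is strictly decreasing for $n \geq 2$. Using the recurrence $s_{n+1} = \bigl(1 - \tfrac{1}{4(n+1)^{2}}\bigr) s_{n}$, the inequality $c_{n+1} < c_{n}$ unwinds to
\[
s_{n}\bigl(2^{2n} + 3n^{2} + 6n + 2\bigr) > 3(n+1)^{2}, \qquad n \geq 2.
\]
Since Lemma \ref{L-s_n} supplies $s_{n} > 2/\pi$, I would verify this by checking the case $n=2$ with the exact value $s_{2} = 45/64$, handling $n=3$ via $(2/\pi) \cdot 111 > 48$, and extending to $n \geq 4$ by comparing growth rates ($2^{2n}$ quadruples while $(n+1)^{2}$ grows by at most a factor of $9/4$).

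Once the sequence $\{c_{n}\}$ is shown to increase on $\{1,2\}$ and decrease for $n \geq 2$, Lemma \ref{L-A/B-g} applied with $m=2$ furnishes a unique $t_{0} \in (0,\infty)$ such that $f$ increases on $(0,t_{0})$ and decreases on $(t_{0},\infty)$; thus $\delta_{0} := f(t_{0})$ is the global maximum of $f$ and $t_{0}$ is the unique positive root of the critical-point equation stated in the theorem. Combining this with the limits
\[
\lim_{t \to 0^{+}} f(t) = c_{1} = \tfrac{2}{3}, \qquad \lim_{t \to \infty} f(t) = \lim_{n \to \infty} c_{n} = \tfrac{2}{\pi}
\]
(the second via Lemma \ref{L-lima_n/b_n}) and the numerical chain $\delta_{0} > 2/3 > 2/\pi$, I conclude $\inf_{t>0} f(t) = 2/\pi$ (not attained) and $\sup_{t>0} f(t) = \delta_{0}$, which gives the sharp ranges $\lambda \in [0, 2/\pi]$ and $\delta \in [\delta_{0}, \infty)$ as asserted.

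The principal obstacle is the verification of the monotonicity pattern of $\{c_{n}\}$: because the sign of $c_{n+1} - c_{n}$ flips between $n=1$ and $n=2$, no single global monotonicity statement suffices, and the tail bound $s_{n} \geq 2/\pi$ must be supplemented by the exact value $s_{2} = 45/64$ in the transition region where $3(n+1)^{2}$ is comparable to $2^{2n}$.
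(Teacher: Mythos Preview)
Your proposal is correct and follows essentially the same route as the paper: both reduce the question to the unimodality of the ratio $R_{1}(t)=f(t)$, write it as a quotient of power series with coefficient ratio $c_{n}=(2^{2n}s_{n}-1)/(2^{2n}-1)$, verify that $\{c_{n}\}$ rises from $n=1$ to $n=2$ and falls thereafter, and then invoke Lemma~\ref{L-A/B-g} together with Lemmas~\ref{L-lima_n/b_n} and~\ref{L-s_n} for the limits. The only cosmetic difference is bookkeeping in the tail estimate (the paper uses $s_{n}>3/5$ for $n\geq 3$ and checks $c_{1},c_{2},c_{3}$ directly, whereas you use the exact $s_{2}=45/64$ at $n=2$ and the bound $s_{n}>2/\pi$ for $n\geq 3$).
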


\begin{proof}
Let us consider the ratio%
\begin{equation*}
R_{1}\left( t\right) =\frac{I_{0}\left( t\right) ^{2}-\left( \sinh t\right)
/t}{\left( \cosh t-1\right) \left( \sinh t\right) /t}=\frac{%
\sum_{n=1}^{\infty }\left( \frac{\left( 2n\right) !}{2^{2n}n!^{4}}-\frac{1}{%
\left( 2n+1\right) !}\right) t^{2n}}{\sum_{n=1}^{\infty }\frac{2^{2n}-1}{%
\left( 2n+1\right) !}t^{2n}}:=\frac{\sum_{n=1}^{\infty }c_{n}t^{2n}}{%
\sum_{n=1}^{\infty }d_{n}t^{2n}}.
\end{equation*}%
To determine the monotonicity of $R_{2}\left( t\right) $, it suffices to
observe the monotonicity of the sequence $\{c_{n}/d_{n}\}$. We have%
\begin{eqnarray*}
\frac{c_{n}}{d_{n}} &=&\left. \left( \frac{\left( 2n\right) !}{2^{2n}n!^{4}}-%
\frac{1}{\left( 2n+1\right) !}\right) \right/ \frac{2^{2n}-1}{\left(
2n+1\right) !} \\
&=&\left. \left( 2^{2n}\times \frac{\left( 2n\right) !\left( 2n+1\right) !}{%
2^{4n}n!^{4}}-1\right) \right/ \left( 2^{2n}-1\right) =\frac{2^{2n}s_{n}-1}{%
2^{2n}-1},
\end{eqnarray*}%
where $s_{n}$ is defined by (\ref{s_n}). Then it is obtained by the relation
(\ref{s_n-rec-rel}) that%
\begin{eqnarray*}
\frac{c_{n+1}}{d_{n+1}}-\frac{c_{n}}{d_{n}} &=&\frac{2^{2n+2}s_{n+1}-1}{%
2^{2n+2}-1}-\frac{2^{2n}s_{n}-1}{2^{2n}-1} \\
&=&\frac{2^{2n+2}\frac{\left( 2n+1\right) \left( 2n+3\right) }{4\left(
n+1\right) ^{2}}s_{n}-1}{2^{2n+2}-1}-\frac{2^{2n}s_{n}-1}{2^{2n}-1} \\
&=&-\frac{2^{2n}\times s_{n}^{\prime }}{\left( n+1\right) ^{2}\left(
2^{2n+2}-1\right) \left( 2^{2n}-1\right) },
\end{eqnarray*}%
where%
\begin{equation*}
s_{n}^{\prime }=\left( 2^{2n}+3n^{2}+6n+2\right) s_{n}-\left(
3n^{2}+6n+3\right) .
\end{equation*}%
Since the sequence $\{s_{n}\}$ is strictly decreasing for $n\geq 0$ and $%
\lim_{n\rightarrow \infty }s_{n}=2/\pi >3/5$, we get%
\begin{eqnarray*}
s_{n}^{\prime } &>&\left( 2^{2n}+3n^{2}+6n+2\right) \frac{3}{5}-\left(
3n^{2}+6n+3\right) \\
&=&\frac{3}{5}\left( 2^{2n}-\left( 2n^{2}+4n+3\right) \right) >0
\end{eqnarray*}%
for $n\geq 3$. Therefore, the sequence $\{c_{n}/d_{n}\}$ is strictly
decreasing for $n\geq 3$.

On the other hand, a direct computation yields%
\begin{equation*}
\frac{c_{1}}{d_{1}}=\frac{2}{3}<\frac{c_{2}}{d_{2}}=\frac{41}{60}>\frac{c_{3}%
}{d_{3}}=\frac{19}{28}.
\end{equation*}%
These shows that the sequence $\{c_{n}/d_{n}\}$ is strictly increasing for $%
n=1,2$ and decreasing for $n\geq 2$. By Lemma \ref{L-A/B-g}, there is a
unique $t_{0}\in \left( 0,\infty \right) $ such that the function $R_{2}$ is
strictly increasing on $\left( 0,t_{0}\right) $ and decreasing on $\left(
t_{0},\infty \right) $. Therefore, we conclude that%
\begin{equation*}
\frac{2}{\pi }=\min \left( R_{1}\left( 0^{+}\right) ,R_{1}\left( \infty
\right) \right) <R_{1}\left( t\right) \leq R_{1}\left( t_{0}\right) =\delta
_{0},
\end{equation*}%
where the first equality holds due to $R_{1}\left( 0^{+}\right) =2/3$ and%
\begin{equation*}
R_{1}\left( \infty \right) =\lim_{t\rightarrow \infty }\frac{%
\sum_{n=1}^{\infty }c_{n}t^{2n}}{\sum_{n=1}^{\infty }d_{n}t^{2n}}%
=\lim_{n\rightarrow \infty }\frac{c_{n}}{d_{n}}=\lim_{n\rightarrow \infty }%
\frac{2^{2n}s_{n}-1}{2^{2n}-1}=\frac{2}{\pi }
\end{equation*}%
by Lemmas \ref{L-lima_n/b_n} and \ref{L-s_n}. Solving the equation $%
R_{1}^{\prime }\left( t\right) =0$, we find that $t_{0}\approx 2.7113555314$%
, and $R_{1}\left( t_{0}\right) \approx 0.67664$.

Thus we complete the proof.
\end{proof}

\begin{theorem}
\label{MT-TQ-LA}Let $p,q\in \mathbb{R}$. The double inequality%
\begin{equation}
\left( \cosh t\right) ^{1-p}\left( \frac{\sinh t}{t}\right) ^{p}<I_{0}\left(
t\right) <q\frac{\sinh t}{t}+\left( 1-q\right) \cosh t  \label{I_0-L-A}
\end{equation}%
holds for $t>0$ if and only if $p\geq 3/4$ and $q\leq 3/4$.
\end{theorem}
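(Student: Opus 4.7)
The double inequality splits into two independent one-sided bounds, which I would treat separately.

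\textbf{Right inequality.} Rewriting $I_{0}(t)<q\sinh t/t+(1-q)\cosh t$ as $q<F(t)$, with
\[
F(t)=\frac{\cosh t-I_{0}(t)}{\cosh t-\sinh t/t},
\]
reduces the task to identifying $\inf_{t>0}F(t)$. Expanding via the defining series for $I_{0}$ and $\cosh t$,
\[
\cosh t-I_{0}(t)=\sum_{n\geq 1}\frac{1-W_{n}}{(2n)!}t^{2n},\qquad \cosh t-\frac{\sinh t}{t}=\sum_{n\geq 1}\frac{2n}{(2n+1)!}t^{2n},
\]
so the coefficient ratio is $c_{n}=(1-W_{n})(2n+1)/(2n)$, with $c_{1}=3/4$ and (since $W_{n}\to 0$) $c_{n}\to 1$. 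Using the recursion $W_{n+1}=W_{n}(2n+1)/(2n+2)$ from Lemma~\ref{L-W_n}, the monotonicity $c_{n+1}>c_{n}$ reduces algebraically to $W_{n}(2n+1)(n+2)>2(n+1)$, which holds at $n=1$ directly and for $n\geq 2$ via the bound $W_{n}>1/\sqrt{\pi(n+1/2)}$ furnished by Lemma~\ref{L-g(x+a)/g(x+1)>} at $x=n$, $a=1/2$. Biernacki's Lemma~\ref{L-A/B} then yields $F$ strictly increasing on $(0,\infty)$ from $3/4$ to $1$ (the upper limit via Lemma~\ref{L-lima_n/b_n}), giving the sharp condition $q\leq 3/4$.

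\textbf{Left inequality.} Taking logarithms rewrites $(\cosh t)^{1-p}(\sinh t/t)^{p}<I_{0}(t)$ as $p>G(t)$ with
\[
G(t)=\frac{\ln\cosh t-\ln I_{0}(t)}{\ln\cosh t-\ln(\sinh t/t)}.
\]
The leading Taylor coefficients of the three logarithms ($t^{2}/2$, $t^{2}/4$, $t^{2}/6$) give $G(0^{+})=(1/4)/(1/3)=3/4$, while the asymptotics $I_{0}(t)\sim e^{t}/\sqrt{2\pi t}$ (Remark following Property~\ref{P-Tq-L_2}), $\cosh t\sim e^{t}/2$, and $\sinh t/t\sim e^{t}/(2t)$ yield $G(\infty)=1/2$. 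The sharp condition $p\geq 3/4$ will follow from the stronger claim that $G$ is strictly decreasing on $(0,\infty)$.

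To prove this monotonicity, I would apply the monotone form of L'Hospital's rule: both $N(t):=\ln\cosh t-\ln I_{0}(t)$ and $D(t):=\ln\cosh t-\ln(\sinh t/t)$ vanish at $t=0$, and $D'(t)=(\sinh 2t-2t)/(t\sinh 2t)>0$, so it suffices to show that
\[
\frac{N'(t)}{D'(t)}=\frac{(\tanh t-I_{1}(t)/I_{0}(t))\,t\sinh 2t}{\sinh 2t-2t}
\]
is decreasing. Using $I_{0}'=I_{1}$ and $(I_{0}^{2})'=2I_{0}I_{1}$, I would multiply numerator and denominator by $2I_{0}^{2}$ to eliminate the Bessel ratio, and then use $\sinh 2t=2\sinh t\cosh t$ to clear the remaining $\cosh t$; what remains is a quotient of two explicit even power series, whose coefficients are combinations of the numbers from Property~\ref{P-I_0^2} and the Wallis ratios, to which Lemma~\ref{L-A/B} (or its unique-extremum version Lemma~\ref{L-A/B-g}) should apply. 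I expect the main obstacle to be verifying the required coefficient-ratio monotonicity: $I_{1}/I_{0}$ lacks the clean even-power-series structure of $I_{0}^{2}$, and after clearing denominators the combinatorial bookkeeping is noticeably heavier than in Part~1, likely requiring an auxiliary inequality on Wallis-type quantities in the spirit of Lemma~\ref{L-s_n}.
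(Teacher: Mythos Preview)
Your treatment of the right inequality is correct and essentially matches the paper's argument (the paper calls your $F$ by $R_{2}$); the only cosmetic difference is that the paper verifies the coefficient monotonicity by showing the auxiliary sequence $\gamma_{n}=\tfrac{(n+2)(2n+1)}{2(n+1)}W_{n}$ is itself increasing with $\gamma_{1}=9/8>1$, whereas you invoke a Wallis-ratio lower bound for $n\geq 2$. Both work.

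For the left inequality your approach diverges from the paper and remains incomplete. The paper does \emph{not} attempt to prove that your $G$ is monotone. Instead it observes that, since $p\mapsto(\cosh t)^{1-p}(\sinh t/t)^{p}$ is decreasing, it suffices to establish the single case $p=3/4$, i.e.\ $I_{0}(t)^{4}>(\sinh t/t)^{3}\cosh t$. Both sides now have explicit even power series (the left via $I_{0}(t)^{2}=\sum_{n\geq 0}\tfrac{(2n)!}{2^{2n}n!^{4}}t^{2n}$ and one further Cauchy product), and the paper compares coefficients term by term, bounding the double sum using the log-convexity of $W_{n}$, the gamma-ratio inequality of Lemma~\ref{L-g(x+a)/g(x+1)>}, and the estimate $s_{n}>2/\pi$ from Lemma~\ref{L-s_n}. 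This route never touches the quotient $I_{1}/I_{0}$.

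Your L'H\^opital-based plan may well be salvageable, but the step you yourself flag as the obstacle --- monotonicity of the coefficient ratio after clearing $I_{1}/I_{0}$ --- is exactly where all the work lies, and you have not carried it out. There is also a structural snag: you hedge by allowing that Lemma~\ref{L-A/B-g} rather than Lemma~\ref{L-A/B} ``should apply,'' but if the coefficient ratio for $N'/D'$ turns out to have an interior extremum rather than being monotone, then $N'/D'$ is not monotone and the monotone L'H\^opital rule no longer delivers monotonicity of $G=N/D$ directly; an additional argument would be needed. The paper's fourth-power comparison sidesteps these issues entirely.
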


\begin{proof}
(i) The necessity of the first inequality in (\ref{I_0-L-A}) follows from
the expansion in power series%
\begin{equation*}
I_{0}\left( t\right) -\left( \cosh t\right) ^{1-p}\left( \frac{\sinh t}{t}%
\right) ^{p}=\frac{1}{3}t^{2}\left( p-\frac{3}{4}\right) +O\left(
t^{4}\right) .
\end{equation*}%
Since the function $p\mapsto \left( \cosh t\right) ^{1-p}\left( \left( \sinh
t\right) /t\right) ^{p}$ is decreasing, to prove the sufficiency, it is
enough to prove that the first inequality in (\ref{I_0-L-A}) holds for $p=3/4
$, that is, 
\begin{equation*}
I_{0}\left( t\right) >\left( \cosh t\right) ^{1/4}\left( \frac{\sinh t}{t}%
\right) ^{3/4},
\end{equation*}%
which is equivalent to%
\begin{equation*}
I_{0}\left( t\right) ^{4}>\left( \frac{\sinh t}{t}\right) ^{3}\cosh t.
\end{equation*}%
Expanding in power series yields%
\begin{equation*}
\left( \cosh t\right) \left( \frac{\sinh t}{t}\right)
^{3}=\sum_{n=0}^{\infty }\frac{2^{4n+3}-2^{2n+1}}{\left( 2n+3\right) !}%
t^{2n}.
\end{equation*}%
On the other hand, by Cauchy product formula and formula (\ref{I_0^2}), it
is obtained that%
\begin{equation*}
I_{0}\left( t\right) ^{4}=\sum_{n=0}^{\infty }\sum_{k=0}^{n}\left( \frac{%
\left( 2k\right) !}{2^{2k}k!^{4}}\frac{\left( 2\left( n-k\right) \right) !}{%
2^{2\left( n-k\right) }\left( n-k\right) !^{4}}\right)
t^{2n}:=\sum_{n=0}^{\infty }\sum_{k=0}^{n}u_{n,k}t^{2n}.
\end{equation*}

Thus it suffices to prove that%
\begin{equation*}
v_{n}=\sum_{k=0}^{n}u_{n,k}-\frac{2^{4n+3}-2^{2n+1}}{\left( 2n+3\right) !}%
\geq 0
\end{equation*}%
for $n\geq 0$. To this end, we use the identity (\ref{W_n}) to $u_{n,k}$,
then apply Lemma \ref{L-W_n} and inequality (\ref{L-gi><}), to get that%
\begin{eqnarray*}
u_{n,k} &=&\frac{1}{k!^{2}\left( n-k\right) !^{2}}\frac{\left( 2k\right) !}{%
2^{2k}k!^{2}}\frac{\left( 2\left( n-k\right) \right) !}{2^{2\left(
n-k\right) }\left( n-k\right) !^{2}} \\
&=&\frac{1}{k!^{2}\left( n-k\right) !^{2}}W_{k}W_{n-k} \\
&>&\frac{1}{k!^{2}\left( n-k\right) !^{2}}W_{n/2}^{2}=\frac{1}{k!^{2}\left(
n-k\right) !^{2}}\left( \frac{\Gamma \left( n/2+1/2\right) }{\Gamma \left(
1/2\right) \Gamma \left( n/2+1\right) }\right) ^{2} \\
&>&\frac{1}{\pi }\frac{1}{k!^{2}\left( n-k\right) !^{2}}\frac{1}{\left(
n/2+1/2\right) }.
\end{eqnarray*}%
Then it follows from Lemma \ref{L-C(n,k)^2} that%
\begin{eqnarray*}
\sum_{k=0}^{n}u_{n,k} &>&\sum_{k=0}^{n}\frac{1}{\pi }\frac{1}{k!^{2}\left(
n-k\right) !^{2}}\frac{1}{\left( n/2+1/2\right) } \\
&=&\frac{2}{\pi \left( n+1\right) n!^{2}}\sum_{k=0}^{n}\frac{n!^{2}}{%
k!^{2}\left( n-k\right) !^{2}}=\frac{2}{\pi \left( n+1\right) n!^{2}}\frac{%
\left( 2n\right) !}{n!^{2}},
\end{eqnarray*}%
and then,%
\begin{eqnarray*}
v_{n} &=&\sum_{k=0}^{n}u_{n,k}-\frac{2^{4n+3}-2^{2n+1}}{\left( 2n+3\right) !}%
>\frac{2}{\pi \left( n+1\right) n!^{2}}\frac{\left( 2n\right) !}{n!^{2}}-%
\frac{2^{4n+3}-2^{2n+1}}{\left( 2n+3\right) !} \\
&=&\frac{1}{\pi }\frac{2^{2n+1}\left( 2^{2n+2}-1\right) }{\left( 2n+3\right)
!}\left( \frac{2^{2n+1}\left( 2n+3\right) }{2^{2n+2}-1}\frac{\left(
2n\right) !\left( 2n+1\right) !}{2^{4n}n!^{4}}-\pi \right)  \\
&=&\frac{1}{\pi }\frac{2^{2n+1}\left( 2^{2n+2}-1\right) }{\left( 2n+3\right)
!}\left( \frac{2^{2n+2}}{2^{2n+2}-1}\left( n+\frac{3}{2}\right) s_{n}-\pi
\right) ,
\end{eqnarray*}%
where $s_{n}$ is defined by (\ref{s_n}).

By Lemma \ref{L-s_n} it follows that%
\begin{equation*}
\frac{2^{2n+2}}{2^{2n+2}-1}\left( n+\frac{3}{2}\right) s_{n}-\pi >\left( n+%
\frac{3}{2}\right) \frac{2}{\pi }-\pi >0
\end{equation*}%
for $n\geq 4$, which implies that $v_{n}>0$ for $n\geq 4$.

This together with the facts that $v_{0}=v_{1}=0$, $v_{2}=3/80$, $%
v_{3}=4/189 $ indicates that $v_{n}\geq 0$ for all integers $n\geq 0$, which
proves the sufficiency.

(ii) The necessity of the second inequality in (\ref{I_0-L-A}) can be
derived from the expansion in power series 
\begin{equation*}
I_{0}\left( t\right) -q\frac{\sinh t}{t}-\left( 1-q\right) \cosh t=\frac{1}{3%
}t^{2}\left( q-\frac{3}{4}\right) +O\left( t^{4}\right) .
\end{equation*}

To prove the sufficiency, let us consider the ratio%
\begin{eqnarray*}
R_{2}\left( t\right)  &=&\frac{\cosh t-I_{0}\left( t\right) }{\cosh t-\left(
\sinh t\right) /t}=\frac{\sum_{n=0}^{\infty }\frac{t^{2n}}{\left( 2n\right) !%
}-\sum_{n=0}^{\infty }\frac{t^{2n}}{2^{2n}n!^{2}}}{\sum_{n=0}^{\infty }\frac{%
t^{2n}}{\left( 2n\right) !}-\sum_{n=0}^{\infty }\frac{t^{2n}}{\left(
2n+1\right) !}} \\
&=&\frac{\sum_{n=0}^{\infty }\frac{2^{n}n!-\left( 2n-1\right) !!}{%
2^{n}n!\left( 2n\right) !}t^{2n}}{\sum_{n=0}^{\infty }\frac{2n}{\left(
2n+1\right) !}t^{2n}}:=\frac{\sum_{n=1}^{\infty }\alpha _{n}t^{2n}}{%
\sum_{n=1}^{\infty }\beta _{n}t^{2n}}.
\end{eqnarray*}%
Simplifying yields%
\begin{equation*}
\frac{\alpha _{n}}{\beta _{n}}=\frac{\left( 2n+1\right) \left(
2^{n}n!-\left( 2n-1\right) !!\right) }{n2^{n+1}n!}=\frac{2n+1}{2n}\left(
1-W_{n}\right) ,
\end{equation*}%
where $W_{n}$ is defined by (\ref{W_n}). Using the recursive relation (\ref%
{W_n+1-W_n}) we have%
\begin{eqnarray*}
\frac{\alpha _{n+1}}{\beta _{n+1}}-\frac{\alpha _{n}}{\beta _{n}} &=&\frac{%
2n+3}{2n+2}\left( 1-W_{n+1}\right) -\frac{2n+1}{2n}\left( 1-W_{n}\right)  \\
&=&\frac{2n+3}{2n+2}\left( 1-\frac{2n+1}{2n+2}W_{n}\right) -\frac{2n+1}{2n}%
\left( 1-W_{n}\right)  \\
&=&\frac{1}{2n\left( n+1\right) }\left( \frac{\left( n+2\right) \left(
2n+1\right) }{2\left( n+1\right) }W_{n}-1\right) :=\frac{\gamma _{n}-1}{%
2n\left( n+1\right) }.
\end{eqnarray*}

Since%
\begin{equation*}
\frac{\gamma _{n+1}}{\gamma _{n}}=\frac{\left( 2n+3\right) \left( n+3\right) 
}{2\left( n+2\right) ^{2}}=1+\frac{n+1}{2\left( n+2\right) ^{2}}>1,
\end{equation*}%
the sequence $\{\gamma _{n}\}$ is strictly increasing for $n\geq 1$, we have 
$\gamma _{n}\geq \gamma _{1}=9/8>1$. This in turn implies that the sequence $%
\{\alpha _{n}/\beta _{n}\}$ is strictly increasing for $n\geq 1$, so that
the ratio $R_{1}$ is increasing for $t>0$. Thus we conclude that%
\begin{equation*}
R_{2}\left( t\right) =\frac{\cosh t-I_{0}\left( t\right) }{\cosh t-\left(
\sinh t\right) /t}>\lim_{t\rightarrow 0^{+}}\frac{\cosh t-I_{0}\left(
t\right) }{\cosh t-\left( \sinh t\right) /t}=\frac{3}{4},
\end{equation*}%
which proves the sufficiency.

This completes the proof.
\end{proof}

\begin{theorem}
\label{MT-TQ-A_p+G}Let $p\in \left( 0,\infty \right) $. Then (i) the double
inequality%
\begin{equation}
1-\frac{1}{2p^{2}}+\frac{1}{2p^{2}}\cosh pt<I_{0}\left( t\right) <1-\frac{1}{%
2q^{2}}+\frac{1}{2q^{2}}\cosh qt  \label{I_0-chpt}
\end{equation}%
holds for $t>0$ if and only if $p\in \left( 0,\sqrt{3}/2\right] $ and $q\in %
\left[ 1,\infty \right) $.

(ii) For $p\in \left( \sqrt{3}/2,1\right) $, the inequality$\,$%
\begin{equation}
I_{0}\left( t\right) \geq 1-\frac{\lambda _{0}}{p^{2}}+\frac{\lambda _{0}}{%
p^{2}}\cosh pt  \label{I_0>chpt-a}
\end{equation}%
holds for $t>0$ with%
\begin{equation*}
\lambda _{0}=\frac{I_{0}\left( t_{0}\right) -1}{\left( \cosh pt_{0}-1\right)
/p^{2}},
\end{equation*}%
where $t_{0}$ is the unique solution of the equation%
\begin{equation*}
\frac{d}{dt}\frac{I_{0}\left( t\right) -1}{\cosh pt-1}=0
\end{equation*}%
on $\left( 0,\infty \right) $
\end{theorem}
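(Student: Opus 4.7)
The plan is to rewrite both halves of (i) and the estimate (ii) as sharp bounds on the single ratio
\[
R_p(t)=\frac{I_0(t)-1}{\cosh pt-1},
\]
so that the lower bound in (\ref{I_0-chpt}) becomes $R_p(t)>1/(2p^2)$, the upper bound becomes $R_q(t)<1/(2q^2)$, and (\ref{I_0>chpt-a}) becomes $R_p(t)\ge \lambda_0/p^2$. From the defining series of $I_0$ and $\cosh pt$,
\[
I_0(t)-1=\sum_{n\ge 1}\frac{t^{2n}}{2^{2n}(n!)^{2}},\qquad \cosh pt-1=\sum_{n\ge 1}\frac{p^{2n}}{(2n)!}t^{2n},
\]
so in the notation of Lemma~\ref{L-A/B} the coefficient ratio is $a_n/b_n=W_n/p^{2n}$, with $W_n$ the Wallis ratio of Lemma~\ref{L-W_n}. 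Using (\ref{W_n+1-W_n}),
\[
\frac{a_{n+1}/b_{n+1}}{a_n/b_n}=\frac{2n+1}{(2n+2)p^{2}},
\]
a quantity strictly increasing in $n$, running from $3/(4p^2)$ at $n=1$ up to $1/p^2$ as $n\to\infty$. The three cases $p^2\le 3/4$, $p^2\ge 1$, and $p^2\in(3/4,1)$ correspond precisely to the three assertions.

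For the sufficiency in (i): when $p^2\le 3/4$ every consecutive ratio is at least $1$, so $\{W_n/p^{2n}\}$ is non-decreasing and strictly increasing from $n=2$ onward; Lemma~\ref{L-A/B} then yields that $R_p$ is strictly increasing on $(0,\infty)$, so $R_p(t)>R_p(0^+)=a_1/b_1=1/(2p^2)$. When $q^2\ge 1$ every consecutive ratio is at most $(2n+1)/(2n+2)<1$, so $\{W_n/q^{2n}\}$ is strictly decreasing and Lemma~\ref{L-A/B} gives $R_q$ strictly decreasing, whence $R_q(t)<R_q(0^+)=1/(2q^2)$. For the necessity, the expansion
\[
I_0(t)-1-\frac{\cosh pt-1}{2p^{2}}=\frac{3-4p^{2}}{192}t^{4}+O(t^{6})
\]
as $t\to 0^+$ forces $p\le\sqrt{3}/2$ in the lower bound and $q\ge\sqrt{3}/2$ in the upper; the stronger constraint $q\ge 1$ (and $p\le 1$) is then extracted from the asymptotic $I_0(t)\sim e^{t}/\sqrt{2\pi t}$ recorded after Property~\ref{P-Tq-L_2}, since $\cosh qt\sim e^{qt}/2$ means the dominant exponentials on the two sides must match.

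For (ii), when $p^2\in(3/4,1)$ the quantity $(2n+1)/((2n+2)p^2)$ starts below $1$ at $n=1$ and eventually exceeds $1$, crossing $1$ at exactly one value of $n$ because it is monotone in $n$. Hence $\{W_n/p^{2n}\}$ is first strictly decreasing and then strictly increasing, which is the hypothesis of Lemma~\ref{L-A/B-g} in its decreasing-then-increasing version: there is a unique $t_0\in(0,\infty)$ such that $R_p$ decreases on $(0,t_0)$ and increases on $(t_0,\infty)$, so $R_p$ attains its global minimum at $t_0$, where $R_p'(t_0)=0$. This immediately yields
\[
R_p(t)\ge R_p(t_0)=\frac{I_0(t_0)-1}{\cosh pt_0-1}=\frac{\lambda_0}{p^{2}},
\]
which is (\ref{I_0>chpt-a}). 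I expect the main obstacle to be the necessity part of (i), namely identifying which of the two boundary behaviours (at $0$ or at $\infty$) produces the sharper constraint on $p$ and on $q$; once that is sorted, the remainder is a clean application of the Biernacki-type Lemmas \ref{L-A/B} and \ref{L-A/B-g} together with the Wallis-ratio identity (\ref{W_n+1-W_n}).
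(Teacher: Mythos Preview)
Your proposal is correct and follows essentially the same route as the paper: both reduce everything to the monotonicity of the ratio $R_p(t)=(I_0(t)-1)/(\cosh pt-1)$, analyse the coefficient sequence via Lemmas~\ref{L-A/B} and~\ref{L-A/B-g}, and obtain necessity from the $t\to 0^+$ expansion together with large-$t$ growth of $I_0$. The only cosmetic differences are that you package the coefficient ratio as $W_n/p^{2n}$ and use the consecutive quotient $(2n+1)/((2n+2)p^2)$, whereas the paper works with the difference $\mu_{n+1}/\nu_{n+1}-\mu_n/\nu_n$, and for the necessity of $q\ge 1$ you invoke the asymptotic $I_0(t)\sim e^t/\sqrt{2\pi t}$ while the paper uses the explicit lower bound $I_0(t)>e^t/(1+2t)$ from Property~\ref{P-I<e^t/s}.
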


\begin{proof}
(i) To prove the necessity for the inequalities (\ref{I_0-chpt}) to hold,
let us consider the ratio%
\begin{equation*}
R_{3}\left( t\right) =\frac{I_{0}\left( t\right) -1}{\left( \cosh
pt-1\right) /p^{2}}=\frac{\sum_{n=1}^{\infty }\frac{t^{2n}}{2^{2n}n!^{2}}}{%
\sum_{n=1}^{\infty }\frac{p^{2n-2}t^{2n}}{\left( 2n\right) !}}:=\frac{%
\sum_{n=1}^{\infty }\mu _{n}t^{2n}}{\sum_{n=1}^{\infty }\nu _{n}t^{2n}}.
\end{equation*}%
A simple computation yields%
\begin{eqnarray*}
\frac{\mu _{n+1}}{\nu _{n+1}}-\frac{\mu _{n}}{\nu _{n}} &=&\frac{1}{p^{2n}}%
\frac{\left( 2n+2\right) !}{2^{2n+2}\left( n+1\right) !^{2}}-\frac{1}{%
p^{2n-2}}\frac{\left( 2n\right) !}{2^{2n}n!^{2}} \\
&=&-\frac{1}{p^{2n}}\frac{\left( 2n\right) !}{2^{2n}n!^{2}}\left( p^{2}-%
\frac{2n+1}{2n+2}\right)  \\
&&\left\{ 
\begin{array}{cc}
\leq 0 & \text{if }p^{2}\geq \max_{n\in \mathbb{N}}\frac{2n+1}{2n+2}%
=1,\bigskip  \\ 
\geq 0 & \text{if }p^{2}\leq \min_{n\in \mathbb{N}}\frac{2n+1}{2n+2}=\frac{3%
}{4}.%
\end{array}%
\right. 
\end{eqnarray*}%
These show that the sequence $\{\mu _{n}/\nu _{n}\}$ is strictly decreasing
for $n\geq 1$ if $p\geq 1$ and increasing if $0<p\leq \sqrt{3}/2$, and so is 
$R_{3}$ by Lemma \ref{L-A/B}. Hence, we get that%
\begin{eqnarray*}
R_{3}\left( t\right)  &<&\lim_{t\rightarrow 0^{+}}R_{3}\left( t\right) =%
\frac{1}{2}\text{ if }p\geq 1, \\
R_{3}\left( t\right)  &>&\lim_{t\rightarrow 0^{+}}R_{3}\left( t\right) =%
\frac{1}{2}\text{ if }0<p\leq \sqrt{3}/2,
\end{eqnarray*}%
which proves (\ref{I_0-chpt}).

The necessity for the first inequality in (\ref{I_0-chpt}) to hold follows
from%
\begin{equation*}
\lim_{t\rightarrow 0^{+}}\frac{I_{0}\left( t\right) -\left( 1-\frac{1}{2p^{2}%
}+\frac{1}{2p^{2}}\cosh pt\right) }{t^{4}}=-\frac{1}{48}\left( p^{2}-\frac{3%
}{4}\right) \geq 0.
\end{equation*}

We prove the necessity for the second inequality in (\ref{I_0-chpt}) to hold
by proof by contradiction. Assume that there is a $q_{0}\in \left( \sqrt{3}%
/2,1\right) $ such that the second inequality in (\ref{I_0-chpt}) holds for $%
t>0$. Then there must be%
\begin{equation*}
\lim_{t\rightarrow \infty }\frac{I_{0}\left( t\right) -\left( 1-\frac{1}{%
2q_{0}^{2}}+\frac{1}{2q_{0}^{2}}\cosh q_{0}t\right) }{e^{q_{0}t}}\leq 0.
\end{equation*}%
Making use of the first inequality in (\ref{I-e^t}) leads to%
\begin{equation*}
\tfrac{I_{0}\left( t\right) -\left( 1-\frac{1}{2q_{0}^{2}}+\frac{1}{%
2q_{0}^{2}}\cosh q_{0}t\right) }{e^{q_{0}t}}>\tfrac{e^{\left( 1-q_{0}\right)
t}}{1+2t}-\left( 1-\tfrac{1}{2q_{0}^{2}}\right) e^{-q_{0}t}+\tfrac{1}{%
2q_{0}^{2}}\tfrac{1+e^{-2q_{0}t}}{2}\rightarrow \infty
\end{equation*}%
as $t\rightarrow \infty $, which gives a contradiction.

(ii) When $p\in \left( \sqrt{3}/2,1\right) $, it is clear that there exist a 
$n_{0}>1$ such that $\mu _{n+1}/\nu _{n+1}-\mu _{n}/\nu _{n}\leq 0$ for $%
1\leq n\leq n_{0}$ and $\mu _{n+1}/\nu _{n+1}-\mu _{n}/\nu _{n}\geq 0$ for $%
n\geq n_{0}$. By Lemma \ref{L-A/B-g} it follows that there is a $t_{0}>0$
such that $R_{3}$ is decreasing on $\left( 0,t_{0}\right) $ and increasing
on $\left( t_{0},\infty \right) $, and therefore, we have%
\begin{equation*}
R_{3}\left( t\right) \geq R_{3}\left( t_{0}\right) =\lambda _{0},
\end{equation*}%
that is, the inequality (\ref{I_0>chpt-a}).
\end{proof}

Similar to \cite[Remark 2.]{Yang-JFS-2015-370979} that the function%
\begin{equation*}
p\mapsto 1-\frac{1}{2p^{2}}+\frac{1}{2p^{2}}\cosh pt
\end{equation*}%
is increasing on $\left( 0,\infty \right) $. Letting $p=\sqrt{3}/2,3/4,1/%
\sqrt{2},2/3,1/2$ and $q=1$ in Theorem \ref{MT-TQ-A_p+G}, we have

\begin{corollary}
\label{C-TQ-A_p+G}It holds that%
\begin{eqnarray*}
\sqrt{\cosh t} &<&2\cosh \frac{t}{2}-1<\frac{9}{8}\cosh \frac{2}{3}-\frac{1}{%
8}<\cosh \frac{t}{\sqrt{2}}< \\
\frac{8}{9}\cosh \frac{3t}{4}+\frac{1}{9} &<&\frac{2}{3}\cosh \frac{\sqrt{3}t%
}{2}+\frac{1}{3}<I_{0}\left( t\right) <\frac{1+\cosh t}{2}
\end{eqnarray*}%
for $t>0$.
\end{corollary}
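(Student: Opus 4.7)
The plan is to reduce the entire chain to three elementary ingredients: Theorem \ref{MT-TQ-A_p+G}, the monotonicity of $p\mapsto 1-\tfrac{1}{2p^{2}}+\tfrac{1}{2p^{2}}\cosh pt$ noted just before the corollary, and one small algebraic inequality for the left-most link. First I would identify each term in the chain as the value of $f_{p}(t):=1-\tfrac{1}{2p^{2}}+\tfrac{1}{2p^{2}}\cosh pt$ at a specific $p$: direct substitution gives $f_{1/2}(t)=2\cosh(t/2)-1$, $f_{2/3}(t)=\tfrac{9}{8}\cosh(2t/3)-\tfrac{1}{8}$, $f_{1/\sqrt 2}(t)=\cosh(t/\sqrt 2)$, $f_{3/4}(t)=\tfrac{8}{9}\cosh(3t/4)+\tfrac{1}{9}$, $f_{\sqrt 3/2}(t)=\tfrac{2}{3}\cosh(\sqrt 3\,t/2)+\tfrac{1}{3}$, and $f_{1}(t)=(1+\cosh t)/2$.

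Once that identification is made, the five central inequalities
\[
f_{1/2}(t)<f_{2/3}(t)<f_{1/\sqrt 2}(t)<f_{3/4}(t)<f_{\sqrt 3/2}(t)
\]
are immediate from the remark preceding the corollary, since $1/2<2/3<1/\sqrt 2<3/4<\sqrt 3/2$ and $p\mapsto f_{p}(t)$ is strictly increasing on $(0,\infty)$ for each fixed $t>0$. The two inequalities involving $I_{0}(t)$, namely $f_{\sqrt 3/2}(t)<I_{0}(t)<f_{1}(t)$, are nothing but the sufficiency statement of Theorem \ref{MT-TQ-A_p+G}(i) evaluated at the extremal admissible parameters $p=\sqrt 3/2$ and $q=1$.

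It remains only to establish the left-most link $\sqrt{\cosh t}<2\cosh(t/2)-1$. Using the double-angle identity $\cosh t=2\cosh^{2}(t/2)-1$ and writing $u=\cosh(t/2)>1$ for $t>0$, this becomes $\sqrt{2u^{2}-1}<2u-1$. Since both sides are positive, squaring gives the equivalent inequality $2u^{2}-1<4u^{2}-4u+1$, i.e., $2(u-1)^{2}>0$, which clearly holds for $u>1$.

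No step is genuinely hard; the main obstacle is simply the bookkeeping of matching each hyperbolic expression to the correct parameter $p$ and then invoking the already-established monotonicity and Theorem \ref{MT-TQ-A_p+G} in the right places.
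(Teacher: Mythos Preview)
Your proposal is correct and follows essentially the same approach as the paper: both rely on the monotonicity of $p\mapsto 1-\tfrac{1}{2p^{2}}+\tfrac{1}{2p^{2}}\cosh pt$ together with Theorem~\ref{MT-TQ-A_p+G} for all links except the left-most one, which is handled by the identical elementary computation $(2\cosh(t/2)-1)^{2}-\cosh t=2(\cosh(t/2)-1)^{2}>0$. Your write-up is in fact more explicit than the paper's, spelling out the identification of each term with $f_{p}(t)$ and the ordering $1/2<2/3<1/\sqrt{2}<3/4<\sqrt{3}/2$, whereas the paper simply says ``it remains to be proved'' for the left-most inequality.
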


\begin{proof}
It remains to be proved that%
\begin{equation*}
\sqrt{\cosh t}<2\cosh \frac{t}{2}-1,
\end{equation*}%
which follows from%
\begin{equation*}
\left( 2\cosh \frac{t}{2}-1\right) ^{2}-\cosh t=2\left( \cosh \frac{t}{2}%
-1\right) ^{2}>0\text{.}
\end{equation*}
\end{proof}

\begin{theorem}
For $p>0$, the inequality%
\begin{equation}
I_{0}\left( t\right) >\left( \cosh pt\right) ^{1/\left( 2p^{2}\right) }
\label{lnI_0>lnchpt}
\end{equation}%
holds for $t>0$ if and only if $p\geq \sqrt{6}/4\approx 0.61237$. Its
reverse holds if and only if $p\in (0,1/2]$. In particular, we have%
\begin{equation}
\sqrt{\cosh t}<\cosh \frac{t}{\sqrt{2}}<\left( \cosh \frac{\sqrt{6}t}{4}%
\right) ^{4/3}<I_{0}\left( t\right) <\left( \cosh \frac{t}{2}\right)
^{2}<e^{t^{2}/4}  \label{Ch.-I-4}
\end{equation}
\end{theorem}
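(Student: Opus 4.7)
The plan is to reduce both directions to a single boundary value of $p$ via a monotonicity of the majorant in $p$, and then to handle the two boundary cases---one via Theorem~\ref{MT-TQ-A_p+G}, the other by a direct power-series comparison in the spirit of the proof of Theorem~\ref{MT-TQ-LA}, which will be the main obstacle.

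For necessity I would Taylor-expand the difference; using $\ln I_{0}(t)=t^{2}/4-t^{4}/64+O(t^{6})$ and the standard series $\ln\cosh(pt)=p^{2}t^{2}/2-p^{4}t^{4}/12+O(t^{6})$ one finds
\[
2p^{2}\ln I_{0}(t)-\ln\cosh(pt)=\left(\frac{p^{4}}{12}-\frac{p^{2}}{32}\right)t^{4}+O(t^{6}),
\]
so the forward (resp.\ reverse) inequality forces $p^{2}\geq 3/8$ (resp.\ $p^{2}\leq 3/8$), i.e.\ $p\geq\sqrt{6}/4$ (resp.\ $p\leq\sqrt{6}/4$). In the reverse direction one needs in addition $2p\leq 1$ from the asymptotics $I_{0}(t)\sim e^{t}/\sqrt{2\pi t}$ (Remark following Property~\ref{P-Tq-L_2}) versus $\cosh(pt)\sim e^{pt}/2$, so the binding constraint becomes $p\leq 1/2$.

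For sufficiency, I would first show that $g_{p}(t):=(\cosh pt)^{1/(2p^{2})}$ is strictly decreasing in $p$ on $(0,\infty)$ for each fixed $t>0$. Writing $\ln g_{p}(t)=(t^{2}/2)\,h(pt)$ with $h(u)=u^{-2}\ln\cosh u$, a direct calculation gives $h'(u)=u^{-3}\phi(u)$ where $\phi(u)=u\tanh u-2\ln\cosh u$ satisfies $\phi(0)=\phi'(0)=0$ and $\phi''(u)=-2u\,\mathrm{sech}^{2}u\,\tanh u<0$ on $(0,\infty)$, so $\phi<0$ and $h'<0$ there. By this monotonicity, the forward inequality for all $p\geq\sqrt{6}/4$ reduces to the single case $p=\sqrt{6}/4$, and the reverse for all $p\leq 1/2$ reduces to $p=1/2$. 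The reverse boundary case states $I_{0}(t)<\cosh^{2}(t/2)=(1+\cosh t)/2$, which is precisely the upper bound in Theorem~\ref{MT-TQ-A_p+G} specialized to $q=1$.

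The forward boundary case is the principal obstacle: one must prove $I_{0}(t)^{3}>\cosh^{4}(\sqrt{6}\,t/4)$. Using the power-reduction identity $\cosh^{4}(\sqrt{6}\,t/4)=\tfrac{3}{8}+\tfrac{1}{2}\cosh(\sqrt{6}\,t/2)+\tfrac{1}{8}\cosh(\sqrt{6}\,t)$ and Cauchy product with the series for $I_{0}^{2}$ from Property~\ref{P-I_0^2}, the $t^{2n}$-coefficient of $I_{0}(t)^{3}$ becomes $a_{n}=\frac{1}{4^{n}n!^{2}}\sum_{j=0}^{n}\binom{n}{j}^{2}\binom{2j}{j}$, while that of the right-hand side is $b_{n}=\frac{3^{n}(4^{n}+4)}{2^{n+3}(2n)!}$. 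Direct computation gives $a_{n}=b_{n}$ for $n=0,1,2$ (consistent with the vanishing of the first three Taylor coefficients of the excess, whose leading term turns out to be $t^{6}/1920$); it then remains to prove $a_{n}>b_{n}$ for $n\geq 3$, which I would establish by explicit checks for small $n$ combined with, for large $n$, a lower bound on the sum defining $a_{n}$ obtained by restricting to a block around its saddle point $j\approx 2n/3$ and invoking the Wallis-ratio estimates of Lemmas~\ref{L-W_n}--\ref{L-s_n}, in the style of the coefficient argument in the proof of Theorem~\ref{MT-TQ-LA}. Finally, the chain (\ref{Ch.-I-4}) follows by combining the two boundary inequalities with the $p$-monotonicity of $g_{p}$---which gives $\sqrt{\cosh t}<\cosh(t/\sqrt{2})<\cosh^{4/3}(\sqrt{6}\,t/4)$---together with the elementary estimate $\cosh^{2}(t/2)<e^{t^{2}/4}$, equivalent to $\cosh u<e^{u^{2}/2}$.
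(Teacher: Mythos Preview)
Your overall architecture---Taylor expansion for necessity, the $p$-monotonicity of $(\cosh pt)^{1/(2p^{2})}$ to reduce sufficiency to the two boundary values, Theorem~\ref{MT-TQ-A_p+G} at $q=1$ for the upper bound $I_{0}(t)<\cosh^{2}(t/2)$, and the asymptotic contradiction for the necessity of $p\le 1/2$---is exactly the paper's. Your computations of $a_{n}$, $b_{n}$ and the checks $a_{n}=b_{n}$ for $n=0,1,2$ are also correct.

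The gap is the forward boundary case $p=\sqrt{6}/4$. Your plan there---compare the $t^{2n}$-coefficients of $I_{0}(t)^{3}$ and $\cosh^{4}(\sqrt{6}t/4)$ and close the tail by a saddle-point localisation of $\sum_{j}\binom{n}{j}^{2}\binom{2j}{j}$ near $j\approx 2n/3$---is only a heuristic; turning that into a uniform bound valid for all $n\ge 3$ would be substantially more work than anything in Theorem~\ref{MT-TQ-LA}, where the relevant sums collapse via Lemma~\ref{L-C(n,k)^2}. The paper avoids this entirely by using Theorem~\ref{MT-TQ-A_p+G} a second time, now at $p=\sqrt{3}/2$, to obtain the intermediate lower bound $I_{0}(t)>\tfrac{2}{3}\cosh(\sqrt{3}t/2)+\tfrac{1}{3}$. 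Since $\sqrt{3}t/2=\sqrt{2}x$ when $x=\sqrt{6}t/4$, the task reduces to the purely elementary inequality
\[
\tfrac{2}{3}\cosh(\sqrt{2}x)+\tfrac{1}{3}>(\cosh x)^{4/3},
\]
which the paper dispatches by differentiating $f_{1}(x)=\ln\bigl(\tfrac{2}{3}\cosh(\sqrt{2}x)+\tfrac{1}{3}\bigr)-\tfrac{4}{3}\ln\cosh x$, writing the numerator of $f_{1}'$ as a hyperbolic sum via product-to-sum, and checking that its Taylor coefficients $\xi_{n}$ factor as $(3\sqrt{2}-4)(\eta_{n}-\eta_{1})(\eta_{n}-\eta_{2})/\eta_{n}$ with $\eta_{n}=(\sqrt{2}+1)^{2n-1}$, hence are positive for $n\ge 3$. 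The insight you are missing is that Theorem~\ref{MT-TQ-A_p+G} already supplies a tractable elementary lower bound for $I_{0}$, so the hard Bessel-versus-hyperbolic comparison can be replaced by a hyperbolic-versus-hyperbolic one.
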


\begin{proof}
(i) The necessary condition for the inequality (\ref{lnI_0>lnchpt}) to hold
follows from%
\begin{equation*}
\lim_{t\rightarrow 0}\frac{I_{0}\left( t\right) -\left( \cosh pt\right)
^{1/\left( 2p^{2}\right) }}{t^{4}}=\frac{1}{24}\left( p^{2}-\frac{3}{8}%
\right) \geq 0.
\end{equation*}

Since the function $p\mapsto \left( \cosh pt\right) ^{1/\left( 2p^{2}\right)
}$ is strictly decreasing which is proved in \cite[Lemma 2]%
{Yang-JIA-2013-116}, to prove the sufficiency, it suffices to prove that the
inequality (\ref{lnI_0>lnchpt}) holds for $t>0$ when $p=\sqrt{6}/4$. In
fact, utilizing Theorem \ref{MT-TQ-A_p+G}, we only need to prove that%
\begin{equation*}
\frac{2}{3}\cosh \frac{\sqrt{3}t}{2}+\frac{1}{3}>\left( \cosh \frac{\sqrt{6}t%
}{4}\right) ^{4/3},
\end{equation*}%
which is equivalent to%
\begin{equation*}
f_{1}\left( x\right) :=\ln \left( \frac{2}{3}\cosh \left( \sqrt{2}x\right) +%
\frac{1}{3}\right) -\frac{4}{3}\ln \left( \cosh x\right) >0
\end{equation*}%
for $x>0$, where $x=\sqrt{6}t/4$.

Differentiation gives%
\begin{equation*}
f_{1}^{\prime }\left( x\right) =\frac{1}{3}\frac{f_{2}\left( x\right) }{%
\left( 2\cosh \left( \sqrt{2}x\right) +1\right) \cosh x},
\end{equation*}%
where%
\begin{equation*}
f_{2}\left( x\right) =6\sqrt{2}\sinh \left( \sqrt{2}x\right) \cosh x-8\cosh
\left( \sqrt{2}x\right) \sinh x-4\sinh x.
\end{equation*}%
Employing product into sum formula and Taylor expansion yields%
\begin{eqnarray*}
f_{2}\left( x\right)  &=&\left( 3\sqrt{2}-4\right) \sinh \left( \sqrt{2}%
+1\right) x+\left( 3\sqrt{2}+4\right) \sinh \left( \sqrt{2}-1\right)
x-4\sinh x \\
&=&\sum_{n=1}^{\infty }\frac{\left( 3\sqrt{2}-4\right) \left( \sqrt{2}%
+1\right) ^{2n-1}+\left( 3\sqrt{2}+4\right) \left( \sqrt{2}-1\right)
^{2n-1}-4}{\left( 2n-1\right) !}x^{2n-1} \\
&:&=\sum_{n=1}^{\infty }\frac{\xi _{n}}{\left( 2n-1\right) !}x^{2n-1}.
\end{eqnarray*}

Letting $\eta _{n}=\left( \sqrt{2}+1\right) ^{2n-1}$and noting that $\left( 
\sqrt{2}-1\right) ^{2n-1}=1/\eta _{n}$, we have%
\begin{eqnarray*}
\eta _{n}\xi _{n} &=&\left( 3\sqrt{2}-4\right) \eta _{n}^{2}-4\eta
_{n}+\left( 3\sqrt{2}+4\right)  \\
&=&\left( 3\sqrt{2}-4\right) \left( \eta _{n}-\sqrt{2}-1\right) \left( \eta
_{n}-5\sqrt{2}-7\right)  \\
&=&\left( 3\sqrt{2}-4\right) \left( \eta _{n}-\eta _{1}\right) \left( \eta
_{n}-\eta _{2}\right) .
\end{eqnarray*}%
It thus can be seen that $\xi _{1}=\xi _{2}=0$ and $\xi _{n}>0$ for $n\geq 3$
in view of $\eta _{n}>\eta _{2}>\eta _{1}$, which proves $f_{2}\left(
x\right) >0$. Hence, $f_{1}^{\prime }\left( x\right) >0$, and then $%
f_{1}\left( x\right) >f_{1}\left( 0\right) =0$ for $x>0$. Thus the
sufficiency follows.

(ii) The sufficiency follows from the last inequality in Corollary \ref%
{C-TQ-A_p+G} with the decreasing property of the function $p\mapsto \left(
\cosh pt\right) ^{1/\left( 2p^{2}\right) }$ on $\left( 0,\infty \right) $.
It remains to be proved the necessity. If there is a $p_{0}\in \left( 1/2,%
\sqrt{6}/4\right) $ such that $I_{0}\left( t\right) <\left( \cosh
p_{0}t\right) ^{1/\left( 2p_{0}^{2}\right) }$ for $t>0$, then there must be%
\begin{equation*}
\lim_{t\rightarrow \infty }\frac{I_{0}\left( t\right) -\left( \cosh
p_{0}t\right) ^{1/\left( 2p_{0}^{2}\right) }}{e^{t/\left( 2p_{0}\right) }}%
\leq 0.
\end{equation*}%
But by the first inequality in (\ref{I-e^t}), we have%
\begin{equation*}
\frac{I_{0}\left( t\right) -\left( \cosh p_{0}t\right) ^{1/\left(
2p_{0}^{2}\right) }}{e^{t/\left( 2p_{0}\right) }}>\tfrac{1}{1+2t}\frac{e^{t}%
}{e^{t/\left( 2p_{0}\right) }}-\left( \frac{1+e^{-2p_{0}t}}{2}\right)
^{1/\left( 2p_{0}^{2}\right) }\rightarrow \infty 
\end{equation*}%
as $t\rightarrow \infty $, which yields a contradiction.

Taking $p=1,1/\sqrt{2},\sqrt{6}/4,1/2,0^{+}$ gives the chain of inequalities
(\ref{Ch.-I-4}).

The theorem is proved.
\end{proof}

\begin{theorem}
Let $\theta \in \left[ 0,\pi /2\right] $. Then the inequality%
\begin{equation}
I_{0}\left( t\right) >\frac{\cosh \left( t\cos \theta \right) +\cosh \left(
t\sin \theta \right) }{2}  \label{I_0-coshttr.}
\end{equation}%
holds for $t>0$ if and only if $\theta \in \left[ \pi /8,3\pi /8\right] $.
In particular, it holds that%
\begin{equation}
I_{0}\left( t\right) >\frac{1}{2}\left( \cosh \tfrac{\sqrt{2-\sqrt{2}}t}{2}%
+\cosh \tfrac{\sqrt{2+\sqrt{2}}t}{2}\right) >\frac{1}{2}\left( \cosh \tfrac{%
\sqrt{3}t}{2}+\cosh \tfrac{t}{2}\right) >\cosh \frac{t}{\sqrt{2}}
\label{I_0-coshttr.-ch}
\end{equation}%
for $t>0$.
\end{theorem}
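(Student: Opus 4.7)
My plan is to compare power series termwise. Writing
\begin{equation*}
I_{0}(t) - \frac{\cosh(t\cos\theta) + \cosh(t\sin\theta)}{2} = \sum_{n=0}^{\infty} c_{n}(\theta)\,t^{2n}, \qquad c_{n}(\theta) = \frac{2W_{n} - p_{n}(\theta)}{2(2n)!},
\end{equation*}
with $p_{n}(\theta) := \cos^{2n}\theta + \sin^{2n}\theta$ and $W_{n}$ the Wallis ratio of Lemma \ref{L-W_n}, the inequality (\ref{I_0-coshttr.}) reduces to proving $p_{n}(\theta) \leq 2W_{n}$ for every $n$, with strict inequality at some $n$. A direct calculation gives $c_{0} = c_{1} = 0$ and $c_{2}(\theta) = \tfrac{1}{64} - \tfrac{1 - \frac{1}{2}\sin^{2} 2\theta}{48}$, which is nonnegative iff $\sin^{2} 2\theta \geq \tfrac{1}{2}$, equivalently $\theta \in [\pi/8,\, 3\pi/8]$. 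When $\theta$ falls outside this interval the leading nonzero Taylor coefficient is $c_{2} < 0$, so the difference is negative near $t = 0$; this yields necessity and pinpoints the two endpoints.

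For sufficiency I would reduce by the symmetry $\theta \mapsto \pi/2 - \theta$ to $\theta \in [\pi/8,\, \pi/4]$ and parametrize by $s = \cos^{2}\theta \sin^{2}\theta = \tfrac{1}{4}\sin^{2} 2\theta \in [1/8,\, 1/4]$. Since $\cos^{2}\theta,\, \sin^{2}\theta$ are the roots of $x^{2} - x + s = 0$, one has $p_{n}(\theta) = \alpha^{n} + \beta^{n}$ with $\alpha,\beta = \tfrac{1 \pm \sqrt{1-4s}}{2}$, and differentiation yields
\begin{equation*}
\partial_{s} p_{n} = \frac{n(\beta^{n-1} - \alpha^{n-1})}{\sqrt{1-4s}} \leq 0 \qquad (n \geq 1),
\end{equation*}
so $p_{n}(\theta) \leq p_{n}(\pi/8)$ throughout the range. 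It therefore suffices to prove $p_{n}(\pi/8) \leq 2W_{n}$ for every $n \geq 0$.

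At $\theta = \pi/8$ one has $\alpha\beta = 1/8$ and $\alpha = \tfrac{2+\sqrt{2}}{4}$, and the recurrence $p_{n} = p_{n-1} - \tfrac{1}{8}p_{n-2}$ with $p_{0} = 2,\, p_{1} = 1$ produces the (fortunate) equalities $p_{n}(\pi/8) = 2W_{n}$ for $n = 0, 1, 2, 3$. For $n \geq 4$ I would telescope ratios: since $0 < \beta < \alpha$,
\begin{equation*}
\frac{p_{n}}{p_{n-1}} = \alpha \cdot \frac{1 + (\beta/\alpha)^{n}}{1 + (\beta/\alpha)^{n-1}} < \alpha \approx 0.8536,
\end{equation*}
whereas $\frac{2W_{n}}{2W_{n-1}} = \frac{2n-1}{2n} \geq \tfrac{7}{8} > \alpha$ for $n \geq 4$. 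Starting from $p_{3} = 2W_{3} = 5/8$ and iterating gives $p_{n}(\pi/8) < 2W_{n}$ for every $n \geq 4$, completing the sufficiency.

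Finally, the chain (\ref{I_0-coshttr.-ch}) drops out from the same machinery. The first inequality is (\ref{I_0-coshttr.}) at $\theta = \pi/8$; the second and third compare $\cosh(\sqrt{a}\,t) + \cosh(\sqrt{b}\,t)$ for the three pairs $(a,b) = (\cos^{2}(\pi/8), \sin^{2}(\pi/8)),\, (3/4, 1/4),\, (1/2, 1/2)$, all with $a+b = 1$ but with products $ab = 1/8 < 3/16 < 1/4$, and the same monotonicity $\partial_{ab}(a^{n} + b^{n}) \leq 0$ established above gives termwise comparison of the power series. The single delicate step is the tail bound $p_{n}(\pi/8) \leq 2W_{n}$ for $n \geq 4$; it works only thanks to the numerical coincidence $\tfrac{2+\sqrt{2}}{4} < \tfrac{7}{8}$, which lets the Wallis ratios outrun the $p_{n}$-ratios starting exactly at the critical index $n = 4$.
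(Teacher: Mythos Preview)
Your proof is correct and follows the same overall strategy as the paper: obtain necessity from the $t^{4}$ Taylor coefficient, reduce sufficiency to the endpoint $\theta=\pi/8$ by monotonicity of the right-hand side in $\theta$, and then settle the endpoint case by a coefficient comparison. The technical execution differs in two places. For the reduction step, the paper differentiates in $\theta$ and uses that $(\sinh x)/x$ is increasing, whereas you parametrize by the elementary symmetric function $s=\cos^{2}\theta\sin^{2}\theta$ and compute $\partial_{s}p_{n}\le 0$; both are routine. The more interesting difference is at the endpoint: the paper forms the ratio $R_{4}(t)=\dfrac{\cosh(t\cos\theta)+\cosh(t\sin\theta)}{2I_{0}(t)}$, shows that the coefficient ratio $\rho_{n}/\sigma_{n}=p_{n}(\pi/8)/(2W_{n})$ is (weakly) decreasing, and invokes the Biernacki--Krzy\.z lemma (Lemma~\ref{L-A/B}). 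You instead prove the termwise inequality $p_{n}(\pi/8)\le 2W_{n}$ directly, noting the exact equalities $p_{n}(\pi/8)=2W_{n}$ for $n=0,1,2,3$ (a fact implicit but not highlighted in the paper's computation) and then comparing consecutive ratios via $\dfrac{p_{n}}{p_{n-1}}<\alpha=\dfrac{2+\sqrt{2}}{4}<\dfrac{7}{8}\le\dfrac{2n-1}{2n}=\dfrac{W_{n}}{W_{n-1}}$ for $n\ge 4$. Your route avoids the ratio-of-series lemma entirely and makes transparent why the argument hinges on the numerical inequality $\sqrt{2}<3/2$; the paper's route yields the slightly stronger conclusion that $R_{4}$ is monotone, not merely bounded by $1$. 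The derivation of the chain (\ref{I_0-coshttr.-ch}) is the same in substance: both proofs use the established monotonicity in $\theta$ (equivalently, in $s$) at $\theta=\pi/8,\pi/6,\pi/4$.
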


\begin{proof}
The necessity can follow from%
\begin{equation*}
\lim_{t\rightarrow 0}\frac{I_{0}\left( t\right) -\frac{\cosh \left( t\cos
\theta \right) +\cosh \left( t\sin \theta \right) }{2}}{t^{4}}=-\frac{1}{192}%
\cos 4\theta \leq 0,
\end{equation*}%
which yields $4\theta \in \left[ \pi /2,3\pi /2\right] $, that is, $\theta
\in \left[ \pi /8,3\pi /8\right] $.

Differentiation gives%
\begin{eqnarray*}
\frac{\partial }{\partial \theta }\frac{\cosh \left( t\cos \theta \right)
+\cosh \left( t\sin \theta \right) }{2} &=&\frac{t^{2}\sin 2\theta }{4}%
\left( \frac{\sinh \left( t\sin \theta \right) }{t\sin \theta }-\frac{\sinh
\left( t\cos \theta \right) }{t\cos \theta }\right) , \\
\frac{d}{dx}\frac{\sinh x}{x} &=&\frac{1}{x}\left( \cosh x-\frac{\sinh x}{x}%
\right) >0,
\end{eqnarray*}%
which show that the function $\theta \mapsto \left[ \cosh \left( t\cos
\theta \right) +\cosh \left( t\sin \theta \right) \right] /2$ is decreasing
on $\left[ 0,\pi /4\right] $ and increasing on $\left[ \pi /4,\pi /2\right] $%
. Thus, to prove the sufficiency, it is enough to prove the inequality (\ref%
{I_0-coshttr.}) holds when $\theta =\pi /8$.

Expanding in power series yields%
\begin{equation*}
R_{4}\left( t\right) :=\frac{\cosh \left( t\cos \theta \right) +\cosh \left(
t\sin \theta \right) }{2I_{0}\left( t\right) }=\frac{\sum_{n=0}^{\infty }%
\frac{\left( \frac{2-\sqrt{2}}{4}\right) ^{n}+\left( \frac{2+\sqrt{2}}{4}%
\right) ^{n}}{\left( 2n\right) !}t^{2n}}{\sum_{n=0}^{\infty }\frac{2}{%
2^{2n}n!^{2}}t^{2n}}:=\frac{\sum_{n=0}^{\infty }\rho _{n}t^{2n}}{%
\sum_{n=0}^{\infty }\sigma _{n}t^{2n}}.
\end{equation*}%
Straightforward computations lead to%
\begin{equation*}
\frac{\rho _{n}}{\sigma _{n}}=\frac{1}{2}\frac{n!^{2}\left( \sqrt{2}\right)
^{n}}{\left( 2n\right) !}\left( \left( \sqrt{2}-1\right) ^{n}+\left( \sqrt{2}%
+1\right) ^{n}\right) ,
\end{equation*}%
\begin{eqnarray*}
\frac{\rho _{n+1}}{\sigma _{n+1}}\left/ \frac{\rho _{n}}{\sigma _{n}}\right.
&=&\frac{\sqrt{2}}{2}\frac{n+1}{2n+1}\frac{\left( \sqrt{2}-1\right)
^{n+1}+\left( \sqrt{2}+1\right) ^{n+1}}{\left( \sqrt{2}-1\right) ^{n}+\left( 
\sqrt{2}+1\right) ^{n}}, \\
\frac{\rho _{n+1}}{\sigma _{n+1}}\left/ \frac{\rho _{n}}{\sigma _{n}}\right.
-1 &=&-\tfrac{\sqrt{2}}{2\left( 2n+1\right) }\tfrac{\left( n+\sqrt{2}%
-1\right) \left( \sqrt{2}-1\right) ^{n-1}+\left( n-\sqrt{2}-1\right) \left( 
\sqrt{2}+1\right) ^{n-1}}{\left( \sqrt{2}-1\right) ^{n}+\left( \sqrt{2}%
+1\right) ^{n}}<0
\end{eqnarray*}%
for $n\geq 0$. This means that the sequence $\{\rho _{n}/\sigma _{n}\}$ is
decreasing for $n\geq 0$, and so\ is $R_{4}$, which proves the sufficiency.

Applying the decreasing property of the function $\theta \mapsto \left[
\cosh \left( t\cos \theta \right) +\cosh \left( t\sin \theta \right) \right]
/2$ and letting $\theta =\pi /8$, $\pi /6$, $\pi /4$ show the inequalities (%
\ref{I_0-coshttr.-ch}).

The proof of this theorem ends.
\end{proof}

\section{Remarks}

\begin{remark}
In the proof of Property \ref{P-I<e^t/s}, we in fact give a simple proof of
the inequality $I_{0}\left( t\right) >\left( \sinh t\right) /t$, which is
equivalent to the first inequality in (\ref{Qi-I-1}). Furthermore, we have%
\begin{equation}
I_{0}\left( t\right) >\frac{\sinh t}{t}+\frac{3\left( 4-\pi \right) }{\pi }%
\frac{\left( t\sinh t-2\cosh t+2\right) }{t^{2}}.  \label{I_0>Y}
\end{equation}%
Indeed, Lupas \cite{Lupas-UBPEFSM-17-19-1972} has proven that%
\begin{equation*}
T\left( f,g\right) \geq \frac{12}{\left( b-a\right) ^{4}}\left(
\int_{a}^{b}\left( x-\frac{a+b}{2}\right) f\left( x\right) dx\right) \left(
\int_{a}^{b}\left( x-\frac{a+b}{2}\right) g\left( x\right) dx\right)
\end{equation*}%
if both $f,g$ are convex on interval $\left[ a,b\right] $, where $T\left(
f,g\right) $ is the Chebyshev functional defined by%
\begin{equation*}
T\left( f,g\right) =\frac{1}{b-a}\int_{a}^{b}f\left( x\right) g\left(
x\right) dx-\frac{1}{\left( b-a\right) ^{2}}\left( \int_{a}^{b}f\left(
x\right) dx\right) \left( \int_{a}^{b}g\left( x\right) dx\right) .
\end{equation*}%
Differentiation yields%
\begin{equation*}
\left( \frac{1}{\sqrt{1-x^{2}}}\right) ^{\prime \prime }=\frac{2x^{2}+1}{%
\left( 1-x^{2}\right) ^{5/2}}>0\text{ \ and \ }\frac{\partial ^{2}}{\partial
x^{2}}\cosh \left( tx\right) =t^{2}\cosh tx>0.
\end{equation*}%
Then by Lupas's inequality we have%
\begin{eqnarray*}
&&\int_{0}^{1}\frac{\cosh \left( tx\right) }{\sqrt{1-x^{2}}}dx-\left(
\int_{0}^{1}\frac{dx}{\sqrt{1-x^{2}}}\right) \left( \int_{0}^{1}\cosh \left(
tx\right) dx\right) \\
&>&12\left( \int_{0}^{1}\frac{x-1/2}{\sqrt{1-x^{2}}}dx\right) \left(
\int_{0}^{1}\left( x-1/2\right) \cosh \left( tx\right) dx\right) \\
&=&\frac{3\left( 4-\pi \right) }{2}\frac{\left( t\sinh t-2\cosh t+2\right) }{%
t^{2}},
\end{eqnarray*}%
which together with (\ref{I_0-Yang-2}) implies (\ref{I_0>Y}).

When $0<t<0.8305...$, the lower bound given in (\ref{I_0>Y}) is weaker than
one in \cite[(1.5)]{Guo-RG-}; when $t>0.8305...$, the lower bound given in (%
\ref{I_0>Y}) is better than one in \cite[(1.5)]{Guo-RG-}.
\end{remark}

We recall the definition of "power-type mean". Let $p\in \mathbb{R}$ and $M$
be a bivariate mean. Then the function $M_{p}:\left( 0,\infty \right) \times
\left( 0,\infty \right) \rightarrow \left( 0,\infty \right) $ defined by%
\begin{equation}
M_{p}\equiv M_{p}(a,b)=M(a^{p},b^{p})^{1/p}\text{ if }p\neq 0\text{ and }%
M_{0}=\sqrt{ab}  \label{M_p...}
\end{equation}%
is proved to be a mean (see \cite[Theorem 1]{Yang-JMI-9(2)-2015}), and is
called \textquotedblleft $p$-order $M$ mean\textquotedblright\ or
\textquotedblleft power-type mean\textquotedblright . Also, we have%
\begin{equation}
M_{p\lambda }\left( a,b\right) =M\left( a^{p\lambda },b^{p\lambda }\right)
^{1/\left( p\lambda \right) }=M_{p}\left( a^{\lambda },b^{\lambda }\right)
^{1/\lambda }  \label{M_pl}
\end{equation}%
for all $\lambda \in \mathbb{R}$.

\begin{remark}
The first inequality in (\ref{Qi-I-2}) can be written as $TQ\left(
a,b\right) <A_{1/2}\left( a,b\right) $. Also, it has been proven that $%
A_{2/3}\left( a,b\right) $ is the best lower bound for identric
(exponential) mean $\mathcal{I}\left( a,b\right) $ (see \cite%
{Stolarsky-AMM-87-1980}, \cite{Pittinger-UBPEFSMF-680-1980}). Then by taking 
$M=A$, $p=2/3$, $\lambda =3/4$ in identity (\ref{M_pl}) we have%
\begin{equation*}
TQ\left( a,b\right) <A_{1/2}\left( a,b\right) =A_{2/3}\left(
a^{3/4},b^{3/4}\right) ^{4/3}<\mathcal{I}\left( a^{3/4},b^{3/4}\right)
^{4/3}=\mathcal{I}_{3/4}\left( a,b\right) ,
\end{equation*}%
which is superior to the second inequality in (\ref{Qi-I-1}), that is, 
\begin{equation*}
TQ\left( a,b\right) <\mathcal{I}_{3/4}\left( a,b\right) <\mathcal{I}\left(
a,b\right) ,
\end{equation*}%
because that the $p$-order identric (exponential) mean is increasing in $%
p\in \mathbb{R}$ (see \cite{Yang-JMI-9(2)-2015}).

Further, expanding in power series gives%
\begin{equation*}
I_{0}\left( t\right) -\exp \left( \frac{t}{\tanh pt}-1/p\right) =-\frac{1}{3}%
\left( p-\frac{3}{4}\right) t^{2}+O\left( t^{4}\right) ,
\end{equation*}%
which implies that the condition $p\geq 3/4$ is necessary for the inequality 
$I_{0}\left( t\right) <\exp \left( t\coth pt-1/p\right) $ to hold for all $%
t>0$. This statement can be stated as a theorem.
\end{remark}

\begin{theorem}
For $a,b>0$ with $a\neq b$, the inequality%
\begin{equation*}
TQ\left( a,b\right) <\mathcal{I}_{p}\left( a,b\right)
\end{equation*}%
holds if and only if $p\geq 3/4$.
\end{theorem}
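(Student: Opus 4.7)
The plan is to lean on the remark immediately preceding the theorem, which already contains essentially all the ingredients; the proof assembles them into a clean if-and-only-if statement.

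For \textbf{necessity}, I would translate the inequality $TQ(a,b)<\mathcal{I}_p(a,b)$ into its hyperbolic form via Remark~\ref{R-M-H}: with $t=(1/2)\ln(b/a)$ one has $TQ(a,b)/\sqrt{ab}=I_{0}(t)$ and $\mathcal{I}_p(a,b)/\sqrt{ab}=\exp\!\left(t/\tanh(pt)-1/p\right)$. Then the inequality becomes
\begin{equation*}
I_{0}(t)<\exp\!\left(\frac{t}{\tanh pt}-\frac{1}{p}\right)\quad\text{for all }t>0.
\end{equation*}
Expanding both sides in powers of $t$ near $t=0$, as recorded in the preceding remark, yields
\begin{equation*}
I_{0}(t)-\exp\!\left(\frac{t}{\tanh pt}-\frac{1}{p}\right)=-\frac{1}{3}\left(p-\frac{3}{4}\right)t^{2}+O(t^{4}).
\end{equation*}
For the left-hand side to remain negative for all small $t>0$, the leading coefficient must be non-positive, forcing $p\geq 3/4$.

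For \textbf{sufficiency}, I would first dispose of the borderline case $p=3/4$ by concatenating three known facts exactly as done in the preceding remark: (i) the sharp bound $TQ(a,b)<A_{1/2}(a,b)$, which is the first inequality in~(\ref{Qi-I-2}); (ii) the Stolarsky–Pittinger result that $A_{2/3}(a,b)<\mathcal{I}(a,b)$ for $a\neq b$; (iii) the homogeneity identity~(\ref{M_pl}) applied with $M=A$, $p=2/3$, $\lambda=3/4$. Substituting $a^{3/4},b^{3/4}$ for $a,b$ in (ii) and combining with (iii) and then (i) gives
\begin{equation*}
TQ(a,b)<A_{1/2}(a,b)=A_{2/3}(a^{3/4},b^{3/4})^{4/3}<\mathcal{I}(a^{3/4},b^{3/4})^{4/3}=\mathcal{I}_{3/4}(a,b).
\end{equation*}
For general $p\geq 3/4$, I would then invoke the monotonicity of the power-type identric mean $p\mapsto\mathcal{I}_{p}(a,b)$ established in \cite{Yang-JMI-9(2)-2015}, which yields $\mathcal{I}_{3/4}(a,b)\leq\mathcal{I}_{p}(a,b)$ and completes the chain.

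I do not expect a serious obstacle: every component is either stated verbatim in the excerpt (Remarks, Property \ref{P-I<e^t/s}, identity~(\ref{M_pl})) or cited from the literature. The only care required is to verify that each inequality in the chain is strict whenever $a\neq b$; this is automatic for (i) and (ii), and the identity in (iii) is of course an equality, so the final inequality remains strict. A subtle point worth double-checking is that the necessity argument handles all $p<3/4$ uniformly—the second-order expansion forces $p\geq 3/4$ regardless of how close $p$ is to $3/4$, since the coefficient $-(p-3/4)/3$ would be strictly positive otherwise, contradicting $I_0(t)<\mathcal{I}_p(a,b)/\sqrt{ab}$ for sufficiently small $t$.
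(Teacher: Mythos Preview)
Your proposal is correct and follows essentially the same approach as the paper: the theorem is stated as a direct consequence of the preceding remark, whose contents---the power-series expansion for necessity, the chain $TQ<A_{1/2}=A_{2/3}(a^{3/4},b^{3/4})^{4/3}<\mathcal{I}_{3/4}$ for the borderline case, and the monotonicity of $p\mapsto\mathcal{I}_{p}$ for the extension to $p\geq 3/4$---are exactly what you assemble.
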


\begin{remark}
For the Toader mean $\mathcal{T}\left( a,b\right) $ of positive numbers $a$
and $b$ defined by (\ref{Toadermean}), it was proved in \cite%
{Alzer-JCAM-172-2004,Qiu-JHIEE-20(1)-2000}, \cite{Qiu-JHIEE-3-1997,
Barnard-JMA-31-2000} that%
\begin{equation}
A_{3/2}\left( a,b\right) <\mathcal{T}\left( a,b\right) <A_{\ln 2/\ln \left(
\pi /2\right) }\left( a,b\right)  \label{T-A_p}
\end{equation}%
hold for $a,b>0$ with $a\neq b$, where $2/3$ and $\ln 2/\ln \left( \pi
/2\right) $ are the best possible. Very recently, we have shown that%
\begin{equation}
\mathcal{T}\left( a,b\right) <\mathcal{I}_{9/4}\left( a,b\right) .
\label{T-I_p}
\end{equation}

Replacing $\left( a,b\right) $ by $\left( a^{1/3}.b^{1/3}\right) $ in the
first inequality of (\ref{T-A_p}) and (\ref{T-I_p}), we get that%
\begin{equation*}
A_{1/2}\left( a,b\right) ^{1/3}<\mathcal{T}\left( a^{1/3},b^{1/3}\right) =%
\mathcal{T}_{1/3}\left( a,b\right) ^{1/3}<\mathcal{I}_{3/4}\left( a,b\right)
^{1/3},
\end{equation*}%
which can be simplified as%
\begin{equation}
A_{1/2}\left( a,b\right) <\mathcal{T}_{1/3}\left( a.b\right) <\mathcal{I}%
_{3/4}\left( a,b\right) .  \label{A-T-I}
\end{equation}%
This together with the inequality $TQ\left( a,b\right) <A_{1/2}\left(
a,b\right) $ gives a nice chain of inequalities:%
\begin{equation}
TQ\left( a,b\right) <A_{1/2}\left( a,b\right) <\mathcal{T}_{1/3}\left(
a.b\right) <\mathcal{I}_{3/4}\left( a,b\right) .  \label{TQ-A-T-I}
\end{equation}
\end{remark}

\begin{remark}
Theorem \ref{MT-TQ-LA} shows that the double inequality%
\begin{equation}
L\left( a,b\right) ^{3/4}A\left( a,b\right) ^{1/4}<TQ\left( a,b\right) <%
\frac{3}{4}L\left( a,b\right) +\frac{1}{4}A\left( a,b\right)
\label{I_0-L-A-m}
\end{equation}%
holds for all $a,b>0$ with $a\neq b$, where the exponents $3/4$, $1/4$ and
weights $3/4$, $1/4$ are the best. Using the well-known inequalities%
\begin{equation*}
L\left( a,b\right) <\frac{A\left( a,b\right) +2G\left( a,b\right) }{3}
\end{equation*}%
proved in \cite{Carlson-AMM-79-1972} and $A\left( a,b\right) >L\left(
a,b\right) $, we have%
\begin{equation*}
L\left( a,b\right) <L\left( a,b\right) ^{3/4}A\left( a,b\right)
^{1/4}<TQ\left( a,b\right) <\frac{3}{4}L\left( a,b\right) +\frac{1}{4}%
A\left( a,b\right) <\frac{A\left( a,b\right) +G\left( a,b\right) }{2}.
\end{equation*}%
It is thus clear that our inequalities (\ref{I_0-L-A-m}) improve Qi et al.'s
results (\ref{Qi-I-1}) and (\ref{Qi-I-2}).
\end{remark}

\begin{remark}
For the Gauss compound mean $AGM\left( a,b\right) $ of positive numbers $a$
and $b$ defined by (\ref{AGM}), it has been proven in \cite%
{Carlson-SIAMR-33-1991}, \cite{Borwein-JMAA-177(2)-1993}, \cite[Theorem 1]%
{Yang-JIA-192-2014} that%
\begin{equation}
L\left( a,b\right) <AGM\left( a,b\right) <L\left( a,b\right) ^{3/4}A\left(
a,b\right) ^{1/4}<L_{3/2}\left( a,b\right)   \label{AGM-L-A}
\end{equation}%
for $a,b>0$ with $a\neq b$, where $L_{p}\left( a,b\right) =L\left(
a^{p},b^{p}\right) ^{1/p}$ is the $p$-order logarithmic mean. This in
combination with our inequalities (\ref{I_0-L-A-m}) and (\ref{TQ-A-T-I})
yields a more nice chain of inequalities involving Gauss compound mean,
Toader mean and Toader-Qi mean:%
\begin{equation}
\begin{array}{l}
L\left( a,b\right) <AGM\left( a,b\right) <L\left( a,b\right) ^{3/4}A\left(
a,b\right) ^{1/4}<TQ\left( a,b\right) \bigskip  \\ 
\text{ \ \ \ \ }<\frac{3}{4}L\left( a,b\right) +\frac{1}{4}A\left(
a,b\right) <A_{1/2}\left( a,b\right) <\mathcal{T}_{1/3}\left( a.b\right) <%
\mathcal{I}_{3/4}\left( a,b\right) .%
\end{array}
\label{AGM-TQi-T_p}
\end{equation}

Moreover, inspired by the third inequality in \ref{AGM-L-A} and the first
inequality in \ref{I_0-L-A-m}, we propose a conjecture as follows.
\end{remark}

\begin{conjecture}
For $a,b>0$ with $a\neq b$, the inequality%
\begin{equation*}
TQ\left( a,b\right) >L_{3/2}\left( a,b\right)
\end{equation*}%
holds.
\end{conjecture}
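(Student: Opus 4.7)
The plan is to translate the conjecture into hyperbolic form and attack it by the power-series-ratio technique used throughout this paper. Setting $t = \frac{1}{2}\ln(b/a)$, homogeneity combined with $L(a,b)/\sqrt{ab} = \sinh t/t$ gives
$$\frac{L_{3/2}(a,b)}{\sqrt{ab}} = \left(\frac{2\sinh(3t/2)}{3t}\right)^{2/3},$$
so the conjecture is equivalent to
$$I_0(t)^3 > \frac{4\sinh^2(3t/2)}{9t^2}, \qquad t > 0,$$
or, clearing denominators, $9t^2 I_0(t)^3 > 2(\cosh 3t - 1)$. A Taylor expansion at $t=0$ shows the two sides agree through order $t^2$ and differ by $t^4/320 + O(t^6) > 0$, so the inequality is sharp at the origin, which makes a power-series approach natural.

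I would then expand both sides as even power series in $t$. By Property \ref{P-I_0^2} and a Cauchy product with the defining series of $I_0(t)$,
$$I_0(t)^3 = \sum_{n=0}^\infty c_n t^{2n}, \qquad c_n = \frac{1}{4^n(n!)^2}\sum_{k=0}^n \binom{n}{k}^2 \binom{2k}{k},$$
while $\frac{2(\cosh 3t - 1)}{9t^2} = \sum_{n=0}^\infty \frac{2 \cdot 9^n}{(2n+2)!} t^{2n}$. For the quotient
$$R(t) = \frac{9t^2 I_0(t)^3}{2(\cosh 3t - 1)} = \frac{\sum_{n \geq 0} 9 c_n\, t^{2n+2}}{\sum_{n \geq 0} \frac{2 \cdot 9^{n+1}}{(2n+2)!}\, t^{2n+2}},$$
one has $R(0^+) = 1$, so by Lemma \ref{L-A/B} the conjecture follows once we know that the coefficient ratio
$$\rho_n := \frac{c_n(2n+2)!}{2 \cdot 9^n}$$
is non-decreasing in $n$. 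Direct computation gives $\rho_0 = \rho_1 = 1$, $\rho_2 = 25/24$, $\rho_3 = 1085/972$, $\rho_4 \approx 1.198$, strongly supporting monotonicity.

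The main obstacle is to prove $\rho_{n+1} \geq \rho_n$ for every $n \geq 0$. My preferred route is to exploit the fact that the Franel-type sum $A(n) := 4^n(n!)^2 c_n = \sum_k \binom{n}{k}^2 \binom{2k}{k}$ is \emph{D-finite}: since the summand is a hypergeometric term in $k$, Zeilberger's creative-telescoping algorithm produces a linear recurrence with polynomial coefficients for $A(n)$, which transfers to a recurrence for $c_n$ and hence for $\rho_n$. Monotonicity should then reduce to a concrete polynomial inequality in $n$, provable by induction after a finite base check. A fallback, modeled on the proof of Theorem \ref{MT-TQ-LA}, would isolate the dominant contribution near $k \approx n/3$ in the sum defining $c_n$, estimate it using the Wallis-ratio bounds in Lemmas \ref{L-W_n} and \ref{L-g(x+a)/g(x+1)>}, prove monotonicity for all $n \geq N_0$, and handle the finitely many initial indices numerically. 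If $\{\rho_n\}$ turns out to be non-monotonic at small indices, Lemma \ref{L-A/B-g} would replace Lemma \ref{L-A/B}, and the minimum of $R$ at its unique interior critical point would need to be bounded below by $1$ by hand.
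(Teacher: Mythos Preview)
This statement is a \emph{conjecture} in the paper, not a theorem: the author proposes it in a remark following the chain of inequalities \eqref{AGM-TQi-T_p} and leaves it open, so there is no proof in the paper to compare against.

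As for your outline, the reduction to $I_0(t)^3 > \dfrac{4\sinh^2(3t/2)}{9t^2}$ and the formula
\[
c_n=\frac{1}{4^n(n!)^2}\sum_{k=0}^n\binom{n}{k}^2\binom{2k}{k}
\]
are correct, and the values $\rho_0=\rho_1=1$, $\rho_2=25/24$, $\rho_3=1085/972$ are right. (A minor slip: for the pair $9t^2I_0(t)^3$ and $2(\cosh 3t-1)$ the agreement is through order $t^4$, with first discrepancy $27t^6/320$; equivalently the discrepancy between $I_0^3$ and $4\sinh^2(3t/2)/(9t^2)$ at order $t^4$ is $3/320$, not $1/320$.) The genuine gap is that everything hinges on the monotonicity of $\rho_n$, and you have only \emph{proposed} two routes without executing either. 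The creative-telescoping route is plausible---the sum $\sum_k\binom{n}{k}^2\binom{2k}{k}$ is indeed D-finite---but the recurrence it produces has order at least $2$ with polynomial coefficients, and turning that into a proof of $\rho_{n+1}\ge\rho_n$ is not automatic; you would need to state the recurrence explicitly and carry out the induction. Your fallback via a ``dominant contribution near $k\approx n/3$'' is much vaguer: there is no obvious reason the sum peaks there, and the Wallis-type estimates in Lemmas~\ref{L-W_n} and~\ref{L-g(x+a)/g(x+1)>} do not by themselves control a sum of this shape. Until one of these is actually carried through, what you have is a credible plan of attack on an open problem rather than a proof.
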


\begin{remark}
From Corollary \ref{C-TQ-A_p+G} or the chain of inequalities (\ref{Ch.-I-4})
we have%
\begin{equation*}
\sqrt{\cosh t}<I_{0}\left( t\right) <\frac{\cosh t+1}{2},
\end{equation*}%
which is equivalent to%
\begin{equation*}
\sqrt{A\left( a,b\right) G\left( a,b\right) }<TQ\left( a,b\right) <\frac{%
A\left( a,b\right) +G\left( a,b\right) }{2}.
\end{equation*}%
This in conjunction with the inequalities%
\begin{equation*}
\sqrt{A\left( a,b\right) G\left( a,b\right) }<\sqrt{L\left( a,b\right) 
\mathcal{I}\left( a,b\right) }<\frac{L\left( a,b\right) +\mathcal{I}\left(
a,b\right) }{2}<\frac{A\left( a,b\right) +G\left( a,b\right) }{2}
\end{equation*}%
proved in \cite{Alzer-AM-47-1986} gives rise to another conjecture.
\end{remark}

\begin{conjecture}
For $a,b>0$ with $a\neq b$, the inequalities%
\begin{equation*}
\sqrt{A\left( a,b\right) G\left( a,b\right) }<TQ\left( a,b\right) <\sqrt{%
L\left( a,b\right) \mathcal{I}\left( a,b\right) }<\frac{L\left( a,b\right) +%
\mathcal{I}\left( a,b\right) }{2}<\frac{A\left( a,b\right) +G\left(
a,b\right) }{2}
\end{equation*}%
hold.
\end{conjecture}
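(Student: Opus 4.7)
The plan begins with the observation that three of the four inequalities in the conjecture are already available. Applying Remark~\ref{R-M-H} with $t=(1/2)\ln(b/a)>0$, the chain becomes
\begin{equation*}
\sqrt{\cosh t}<I_{0}(t)<\sqrt{\tfrac{\sinh t}{t}\,e^{t\coth t-1}}<\tfrac{1}{2}\!\left(\tfrac{\sinh t}{t}+e^{t\coth t-1}\right)<\tfrac{1+\cosh t}{2}.
\end{equation*}
The first inequality is contained in the chain of Corollary~\ref{C-TQ-A_p+G}; the third is the arithmetic-geometric mean inequality applied to $L$ and $\mathcal{I}$; the fourth is part of Alzer's chain from \cite{Alzer-AM-47-1986}, already quoted in the remark preceding the conjecture. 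So the entire conjecture reduces to the single new inequality
\begin{equation*}
\phi(t):=2\ln I_{0}(t)-\ln\tfrac{\sinh t}{t}-(t\coth t-1)<0 \qquad \text{for all } t>0.
\end{equation*}

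I would first dispose of the endpoints. A Taylor computation gives $\phi(t)=-t^{4}/288+91\,t^{6}/90720+O(t^{8})$ as $t\to 0^{+}$, so $\phi<0$ in a neighborhood of the origin. At the other end, $I_{0}(t)\sim e^{t}/\sqrt{2\pi t}$ (from the remark after Property~\ref{P-Tq-L_2}) together with $\sinh t/t\sim e^{t}/(2t)$ and $t\coth t-1\sim t-1$ yield $\lim_{t\to\infty}\phi(t)=1-\ln\pi<0$. So $\phi$ is negative at both ends; the task is to control the interior.

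The principal attack is a termwise coefficient comparison. Writing
\begin{equation*}
I_{0}(t)^{2}=\sum_{n\ge 0}a_{n}t^{2n},\quad a_{n}=\frac{(2n)!}{4^{n}n!^{4}},\qquad \frac{\sinh t}{t}\,e^{t\coth t-1}=\sum_{n\ge 0}b_{n}t^{2n},
\end{equation*}
with the first formula coming from Property~\ref{P-I_0^2}, the plan is to show $a_{n}\leq b_{n}$ for every $n\ge 0$, which would prove the inequality at one stroke. The first few values agree or go in the correct direction ($a_{0}=b_{0}=1$, $a_{1}=b_{1}=1/2$, $a_{2}=3/32<7/72=b_{2}$, $a_{3}=5/576<427/45360=b_{3}$), and Lemma~\ref{L-lima_n/b_n} combined with the asymptotics above gives $\lim_{n\to\infty}a_{n}/b_{n}=e/\pi<1$, both consistent with the desired direction. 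The coefficients $b_{n}$ arise as Cauchy products of the series for $\sinh t/t$ with that of $\exp\circ(t\coth t-1)$, and the inner series is the Bernoulli expansion $t\coth t-1=\sum_{k\ge 1}\frac{2^{2k}B_{2k}}{(2k)!}t^{2k}$; developing $b_{n}$ through Bell polynomials turns $a_{n}\leq b_{n}$ into a combinatorial estimate mixing central binomial coefficients with Bernoulli numbers.

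The main obstacle is precisely this combinatorial estimate. Because $B_{2k}$ alternates in sign, the $b_{n}$ are not a priori monotone in a way that would make Lemmas~\ref{L-A/B} and \ref{L-A/B-g} directly applicable to $\phi$ or to the ratio $(\sinh t/t)\,e^{t\coth t-1}/I_{0}(t)^{2}$. If the termwise bound proves intractable, the backup plan is a split $(0,\infty)=(0,T]\cup[T,\infty)$ at a numerical threshold $T$: on $(0,T]$ use a sharp Taylor remainder for $\phi$, possibly after rewriting $-\phi$ as a ratio of series with piecewise-monotone coefficient ratio so that Lemma~\ref{L-A/B-g} applies to a shifted auxiliary problem; on $[T,\infty)$ combine the upper bound $I_{0}(t)<e^{t}/\sqrt{1+2t}$ from Property~\ref{P-I<e^t/s} with the elementary $e^{t\coth t-1}>e^{t-1}$ (valid since $\coth t>1$) to reduce the tail to an explicit elementary inequality. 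Either route is lengthy, which is why the statement is offered only as a conjecture.
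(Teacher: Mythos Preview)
The paper does not prove this statement at all: it is explicitly labeled a \emph{conjecture}, motivated by the known chain $\sqrt{AG}<\sqrt{L\mathcal{I}}<(L+\mathcal{I})/2<(A+G)/2$ from \cite{Alzer-AM-47-1986} together with the double bound $\sqrt{AG}<TQ<(A+G)/2$ from Corollary~\ref{C-TQ-A_p+G}, and no argument for the key middle inequality $TQ<\sqrt{L\mathcal{I}}$ is offered. So there is no ``paper's own proof'' to compare against.

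Your reduction is exactly right: the first, third, and fourth inequalities are already in hand, and everything hinges on $I_{0}(t)^{2}<\dfrac{\sinh t}{t}\,e^{t\coth t-1}$. Your local expansion $\phi(t)=-t^{4}/288+O(t^{6})$ and the asymptotic $\phi(\infty)=1-\ln\pi<0$ are both correct, and the numerical checks $a_{2}=3/32<7/72=b_{2}$, $a_{n}/b_{n}\to e/\pi$ are consistent with the conjecture. However, what you have written is a strategy outline, not a proof. The termwise claim $a_{n}\le b_{n}$ is stated but not established, and you yourself identify the obstacle: the Bernoulli-number alternation in the $b_{n}$ prevents a clean monotonicity argument via Lemmas~\ref{L-A/B} or \ref{L-A/B-g}. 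The backup splitting plan is likewise only sketched; in particular, on the tail the bound $I_{0}(t)<e^{t}/\sqrt{1+2t}$ from Property~\ref{P-I<e^t/s} combined with $e^{t\coth t-1}>e^{t-1}$ reduces to $\dfrac{e^{t}}{1+2t}<\dfrac{\sinh t}{t}\,e^{-1}$, i.e.\ $2et<(1+2t)(1-e^{-2t})$, which is \emph{false} for large $t$ (the left side grows linearly with coefficient $2e$, the right with coefficient $2$), so that particular combination of crude bounds cannot close the tail and a sharper asymptotic input would be needed.

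In short: your reduction and endpoint analysis are sound and go beyond what the paper does, but the central inequality remains open in your write-up just as it does in the paper, and the specific tail estimate you propose does not work as stated.
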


\begin{remark}
Kazarinoff in \cite{Kazarinoff-MS-1956} gave the following Wallis
inequalities:%
\begin{equation}
\frac{1}{\sqrt{\pi \left( n+1/2\right) }}<\frac{\left( 2n-1\right) !!}{%
\left( 2n\right) !}<\frac{1}{\sqrt{\pi \left( n+1/4\right) }}\text{, \ }n\in 
\mathbb{N}\text{.}  \label{W-KI}
\end{equation}%
For more information on the Wallis inequalities, please refer to \cite%
{Qi-JIA-2010-493058} and the references therein. Our Lemma \ref{L-s_n} tells
us that the sequence $\{s_{n}\}$ is strictly decreasing for $n\in \mathbb{N}$%
, therefore, we have%
\begin{equation*}
\frac{2}{\pi }<s_{n}=\left( 2n+1\right) \left( \frac{\left( 2n-1\right) !!}{%
2^{n}n!}\right) ^{2}<\frac{3}{4},
\end{equation*}%
which is equivalent to%
\begin{equation*}
\frac{1}{\sqrt{\pi }\sqrt{n+1/2}}<\frac{\left( 2n-1\right) !!}{2^{n}n!}<%
\frac{\sqrt{6}}{4\sqrt{n+1/2}}.
\end{equation*}

From the proof of Theorem \ref{MT-TQ-sqrLA-w} it is seen that the sequence $%
\{c_{n}/d_{n}\}$ defined by%
\begin{equation*}
\frac{c_{n}}{d_{n}}=\frac{2^{2n}s_{n}-1}{2^{2n}-1}
\end{equation*}%
is strictly increasing for $n=1,2$ and decreasing for $n\geq 2$. Therefore,
we have%
\begin{equation*}
\min \left( \frac{2}{3},\frac{2}{\pi }\right) =\min \left( \frac{c_{1}}{d_{1}%
},\lim_{n\rightarrow \infty }\frac{c_{n}}{d_{n}}\right) <\frac{2^{2n}s_{n}-1%
}{2^{2n}-1}\leq \frac{c_{2}}{d_{2}}=\frac{41}{60},
\end{equation*}%
which is equivalent to%
\begin{equation}
\sqrt{\frac{\left( \pi -2\right) 2^{-2n}+2}{\pi \left( 2n+1\right) }}<\frac{%
\left( 2n-1\right) !!}{\left( 2n\right) !!}<\sqrt{\frac{41+19\times 2^{-2n}}{%
60\left( 2n+1\right) }},n\in \mathbb{N}\text{.}  \label{W-YI}
\end{equation}%
This in fact gives a new Wallis type inequality, and the lower bound given
in (\ref{W-YI}) is clearly superior to the one given in (\ref{W-KI}).
\end{remark}

\end{document}